\newtheorem{theorem}{Theorem}[section]
\newtheorem{lemma}[theorem]{Lemma}
\newtheorem{corollary}[theorem]{Corollary}
\newtheorem*{thA}{Theorem A}
\newtheorem*{claim}{Claim}
\newtheorem*{example}{Example}
\begin{document}

\title[Some results on order bounded awDP operators]
      {Some results on order bounded almost weak Dunford-Pettis operators}

\author{Nabil Machrafi}
\address
{Universit\'{e} Ibn Tofa\"{\i}l, Facult\'{e} des Sciences,
D\'{e}partement de Math\'{e}\-matiques, B.P. 133, K\'{e}nitra
14000, Maroc.} \email{nmachrafi@gmail.com}

\author{Aziz Elbour}
\address
{Universit\'{e} Moulay Isma\"{\i}l, Facult\'{e} des Sciences et
Techniques, D\'{e}partement de Math\'{e}\-matiques, B.P. 509,
Errachidia 52000, Maroc.} \email{azizelbour@hotmail.com}

\author{Mohammed Moussa}
\address
{Universit\'{e} Ibn Tofa\"{\i}l, Facult\'{e} des Sciences,
D\'{e}partement de Math\'{e}\-matiques, B.P. 133, K\'{e}nitra
14000, Maroc.} \email{mohammed.moussa09@gmail.com}

\subjclass[2010]{46A40, 46B40, 46B42}
\date{29/09/2015; \ \  Revised 25/02/2016}
\keywords{Almost weak Dunford-Pettis operator, weak Dunford-Pettis
operator, almost Dunford-Pettis set, almost (L)-set, Banach
lattice.}

\begin{abstract} We give some new characterizations of almost
weak Dunford-Pettis ope\-rators and we investigate their
relationship with weak Dunford-Pettis operators.
\end{abstract}

\maketitle

\section{Introduction and notations}

Throughout this paper, $X,$ $Y$ will denote real Banach spaces, and $E,\,F$
will denote real Banach lattices. $B_{X}$ is the closed unit ball of $X$. We
mean by operator between Banach spaces, a bounded linear application.

A real vector space $E$ is said to be a \emph{partially} \emph{ordered
vector space} whenever it is equipped with a partial order relation $\geq $
(i.e., a reflexive, antisymmetric, and transitive binary relation on $E$)
that is compatible with the algebraic structure of $E$ in the sense that it
satisfies the following two axioms:

\begin{enumerate}
\item If $x\geq y$, then $x+z\geq y+z$ holds for all $z\in E$.

\item If $x\geq y$, then $\lambda x\geq \lambda y$ holds for all $\lambda
\geq 0$.
\end{enumerate}

An alternative notation for $x\geq y$ is $y\leq x$. The \emph{positive cone}
of $E$, denoted by $E^{+}$, is the set of all positive vectors of $E$, i.e.,
$E^{+}:=\left\{ x\in E:x\geq 0\right\} $. If furthermore, every set $\left\{
x,y\right\} \subset E$ has a supremum $\sup \left\{ x,y\right\} =x\vee y$
(or equivalently it has an infimum $\inf \left\{ x,y\right\} =x\wedge y$)
then $E$ is called a \emph{vector lattice} (or \emph{Riesz space}). The
elements%
\begin{equation*}
x^{+}=x\vee 0\text{, \ \ }x^{-}=\left( -x\right) \vee 0\text{, \ \ }%
\left\vert x\right\vert =x\vee \left( -x\right)
\end{equation*}
are called the \emph{positive part}, \emph{negative part}, and \emph{modulus}
of the element $x$, respectively. Note that $x=x^{+}-x^{-}$ and $%
|x|=x^{+}+x^{-}$. If $x,y\in E$ and $x\leq y$, then the \emph{order interval}
$\left[ x,y\right] $ is defined by%
\begin{equation*}
\left[ x,y\right] =\left\{ z\in E:x\leq z\leq y\right\} \text{.}
\end{equation*}

A subset $A\subset E$ is said to be \emph{order bounded} if it is
contained
in some order interval. It is said to be \emph{solid} if conditions $%
\left\vert x\right\vert \leq \left\vert a\right\vert $ and $a\in A$ imply $%
x\in A$. The smallest solid set containing a set $A$ is called the \emph{%
solid hull} of $A$ and denoted by $sol\left( A\right) $. We have%
\begin{equation*}
sol\left( A\right) =\left\{ x\in E:\left\vert x\right\vert \leq \left\vert
a\right\vert \text{ for some }a\in A\right\} .
\end{equation*}

The elements $x,y\in E$ are called \emph{disjoint} if $\left\vert
x\right\vert \wedge \left\vert y\right\vert =0$. A generalized sequence $%
\left( x_{\alpha }\right) $ in a vector lattice $E$ (i.e. a function $\alpha
\rightarrow x_{\alpha }$ from some an upward directed set $I$ to $E$) is
called disjoint, if $\left\vert x_{\alpha }\right\vert \wedge \left\vert
x_{\beta }\right\vert =0$, $\alpha \neq \beta $. We will use the notation $%
x_{\alpha }\perp x_{\beta }$ to mean that the generalized sequence $\left(
x_{\alpha }\right) $ is disjoint. A collection $\left( e_{i}\right) \subset
E^{+}\backslash \left\{ 0\right\} $ is called a \emph{complete disjoint
system} if%
\begin{equation*}
e_{i}\wedge e_{j}=0,\quad i\neq j\quad \text{and} \quad
e_{i}\wedge \left\vert x\right\vert =0\quad \text{for all}\ i \
\text{implies}\quad  x=0\text{.}
\end{equation*}

A positive non-zero element $x$ of an \emph{Archimedean vector lattice} $E$
(i.e., such vector lattices that $\inf_{n}\left\{ \frac{1}{n}x\right\} =0$
for every $x\in E^{+}$) is called \emph{discrete}, if%
\begin{equation*}
u,v\in \left[ 0,x\right] \quad \text{and} \quad u\wedge v=0 \quad \text{imply} \quad u=0
 \quad \text{or} \quad
v=0\text{.}
\end{equation*}

An Archimedean vector lattice $E$ is called \emph{discrete} (or \emph{atomic}%
), if $E$ has a complete disjoint system consisting of discrete elements, or
equivalently, every non-trivial interval $\left[ 0,x\right] $ contains a
discrete element. A norm $\left\Vert .\right\Vert $ on a vector lattice $%
\left( E,\leq \right) $ is said to be a \emph{lattice norm} whenever%
\begin{equation*}
|x|\leq |y| \quad \text{implies} \quad \left\Vert x\right\Vert
\leq \left\Vert y\right\Vert .
\end{equation*}

A vector lattice equipped with a lattice norm is known as a \emph{normed
vector lattice}. If a normed vector lattice is also norm complete, then it
is referred to as a \emph{Banach lattice}. Note that If $E$ is a Banach
lattice, its topological dual $E^{\prime }$, endowed with the dual norm, is
also a Banach lattice. A norm $\left\Vert .\right\Vert $ of a Banach lattice
$E$ is called \emph{order continuous} if for each generalized sequence $%
(x_{\alpha })\subset E$, $x_{\alpha }\downarrow 0$ implies $\left\Vert
x_{\alpha }\right\Vert \rightarrow 0$, where the notation $x_{\alpha
}\downarrow 0$ means that the generalized sequence $(x_{\alpha })$ is
decreasing and $\inf \left\{ x_{\alpha }\right\} =0$. A Banach lattice $E$
is said to be a \emph{Kantorovich--Banach space} (briefly \emph{KB-space})
whenever every increasing norm bounded sequence of $E^{+}$ is norm
convergent. Note that every KB-space has order continuous norm. The lattice
operations in a Banach lattice $E$ are said to be \emph{sequentially weakly}%
\ \emph{continuous}, if for every weakly null sequence $(x_{n})$ in $E$ we
have $\left\vert x_{n}\right\vert \rightarrow 0$ for $\sigma (E,E^{\prime })$%
. For a sequence $\left( x_{n}\right) \subset E$ of a Banach lattice, the
following fact will be used throughout this paper (see \cite[Theorem 4.34]%
{AB3}):%
\begin{equation*}
x_{n}\overset{\sigma (E,E^{\prime })}{\rightarrow }0, \quad
x_{n}\perp x_{m} \quad \text{implies} \quad \left\vert
x_{n}\right\vert \overset{\sigma (E,E^{\prime })}{\rightarrow
}0\text{.}
\end{equation*}

Recall that a subset $A$ of a Banach space $X$ is called a \emph{%
Dunford-Pettis (DP) set}, if each weakly null sequence $\left( f_{n}\right) $
in $X^{\prime }$ converges uniformly to zero on $A$. In his paper \cite{L},
T.Leavelle considered the dual version of DP sets, so-called \emph{(L) sets}%
, that is, each subset $B$\ of the topological dual $X^{\prime }$,\ on which
every weakly null sequence $(x_{n})$ in\ $X$\ converges uniformly to zero.
Recently, the authors of \cite{KB} and\ \cite{aqz}\ considered respectively,
the disjoint versions of DP sets and (L)-sets, which\textbf{\ }are
respectively called \emph{almost Dunford-Pettis} (almost DP) sets and \emph{%
almost (L)-sets}. From \cite{KB} (resp.~\cite{aqz}), a norm bounded subset $%
A\subset E$ (resp. $B\subset E^{\prime }$) is said to be almost DP
(resp. almost (L)-) set, if every disjoint weakly null sequence
$(f_{n})\subset E^{\prime }$ (resp. $(x_{n})\subset E$ ) converges
uniformly to zero on $A$ (resp. $B$). Clearly, every DP set in a
Banach lattice (resp. (L)-set in the topological dual of a Banach
lattice) is an almost DP (resp. almost (L)-) set . But the
converse is false in general for the two latter classes of sets.

A Banach lattice $E$ is said to have the (\emph{positive}) \emph{Schur}
\emph{property} if every (positive) weakly null sequence $\left(
x_{n}\right) \subset E$ is norm null. Furthermore, A Banach space $X$ is
said to have the \emph{Dunford-Pettis} \emph{(DP)} \emph{property} if each
relatively weakly compact set in $X$ is a DP set, alternatively, $%
f_{n}\left( x_{n}\right) \rightarrow 0$ for every weakly null sequences $%
(x_{n})\subset X$ , $(f_{n})\subset X^{\prime }$.

An operator $T:E\rightarrow F$ between two vector lattices is called an
\emph{order bounded operator}, if it maps order bounded subsets of $E$ into
an order bounded ones of $F$. It is \emph{positive} if $T(E^{+})\subset
F^{+} $. The positive operators between two vector lattices generate the
vector space of all \emph{regular operators}, i.e., operators that are
written as a difference of two positive operators. Note that a regular
operator between two vector lattices need not be order bounded (see example
of H. P. Lotz \cite[Example 1.16]{AB3}). An operator $T:X\rightarrow Y$
between two Banach spaces is said to be \emph{Dunford-Pettis}, if $T$
carries each weakly null sequence $(x_{n})$ in $X$ to a norm null one in $Y$%
, equivalently, $T$ carries relatively weakly compact subsets of
$X$ to a relatively compact ones in $Y$. The following weak
versions of Dunford-Pettis operators are considered in the Banach
lattice setting. An operator $T:E\rightarrow F$~is

\begin{enumerate}
\item[-] \emph{weak Dunford-Pettis} (wDP), if $f_{n}\left( Tx_{n}\right)
\rightarrow 0$ whenever $(x_{n})$ converges weakly to $0$ in $E$ and $%
(f_{n}) $ converges weakly to $0$ in $F^{\prime }$, equivalently,
$T$ maps relatively weakly compact sets in $E$ to a
Dunford--Pettis sets in $F$ (see Theorem 5.99 \break
of~\cite{AB3}).

\item[-] \emph{almost Dunford-Pettis} (almost DP), if $\left\Vert
Tx_{n}\right\Vert \rightarrow 0$ whenever $\left( x_{n}\right) \subset E$ is
a disjoint weakly null sequence, equivalently, $\left\Vert Tx_{n}\right\Vert
\rightarrow 0$ for every weakly null sequence $\left( x_{n}\right) \subset E$
consisting of positive terms \cite[ Theorem 2.2]{bour}.
\end{enumerate}

The class of wDP operators (between two Banach spaces) was
introduced by C.~D.~Ali\-prantis and O.~Burkinshaw in \cite{AB1}.
It extends the notions of DP operator and the DP property of a
Banach space, in the sense that every DP operator $T:X\rightarrow
Y$ is wDP, and a Banach space $X$ has the DP
property iff the identity operator on $X$ is wDP. Next, J.~A.~Sanchez \cite%
{Sanchez} introduced the class of almost DP operators (from a
Banach lattice into a Banach space) which also extends, in the
Banach lattice setting, the notions of DP operator and the
positive Schur property, since every DP operator $T:E\rightarrow
Y$ is almost DP, and a Banach lattice $E$ has the positive Schur
property iff the identity operator on $E$ is almost DP. Also,
F.~R\"{a}biger \cite{Rabig1} distinguished a class of Banach
lattices with a
weak version of the DP property. A Banach lattice $E$ is said to have the%
\emph{\ weak Dunford-Pettis} \emph{(wDP)} \emph{property} if every weakly
compact operator on $E$ is almost DP, equivalently (see \cite[Proposition 1]%
{expWnuk}), for all sequences $(x_{n})\subset E_{+}^{\prime }$, $%
(f_{n})\subset E^{\prime }$,%
\begin{equation}
x_{n}\overset{\sigma (E,E^{\prime })}{\rightarrow }0,\quad
x_{n}\perp x_{m} \quad \text{and} \quad f_{n}\overset{\sigma
(E,E^{\prime })}{\rightarrow }0 \quad \text{imply} \quad
f_{n}\left( x_{n}\right) \rightarrow 0.  \tag{*}  \label{eq0}
\end{equation}

Inspired by the preceding facts, K. Bouras and M. Moussa introduced
naturally in their recent paper \cite{KM} the class of \emph{almost weak
Dunford-Pettis} (awDP) operators, as a class of operators that extends both
the notions of wDP operator, almost DP operator, and the wDP property of a
Banach lattice. An operator $T:E\rightarrow F$ is said to be awDP, if for
all sequences $(x_{n})\subset E$ , $(f_{n})\subset F^{\prime }$,%
\begin{equation*}
x_{n}\overset{\sigma (E,E^{\prime })}{\rightarrow }0, \quad
x_{n}\perp x_{m} \quad \text{and} \quad f_{n}\overset{\sigma
(E,E^{\prime })}{\rightarrow }0, \quad f_{n}\perp f_{m} \quad
\text{imply} \quad f_{n}\left( Tx_{n}\right) \rightarrow 0.
\end{equation*}

It follows from \cite[Theorem 2.1(3)]{KM} combined with
(\ref{eq0}) that a Banach lattice $E$ has the wDP property iff the
identity operator on $E$ is awDP. Note that the authors gave in
\cite[Theorem 2.1]{KM} some sequence characterisations of positive
almost weak Dunford-Pettis operators, which allowed them to
establish some new sequence characterisations of the wDP property
of a Banach lattice (see \cite[Corollary 2.1]{KM}). The class of
awDP operators contains strictly that of wDP operators as well as
that of almost DP operators, that is, every wDP (resp. almost DP)
operator is awDP, but there exists an awDP operator which is not
wDP nor almost DP. The example of such operator is the identity
operator on a Banach lattice $\Phi $ with the wDP property but
without the DP property nor the positive Schur property. W. Wnuk
gave in \cite[p.~231]{expWnuk} an example of such Banach lattice:

\begin{example}
{\rm{Let $\omega $ be a positive non-increasing continuous
function on $\left(
0,1\right) $ so that%
\begin{equation*}
\lim_{t\rightarrow 0}\omega (t)=\infty \quad \text{and} \quad
\int_{0}^{1}\omega (t)\,dt=1.
\end{equation*}%
The Lorentz function space $E=\wedge (\omega ,1)$ is the space of all
measurable functions $f$ on $\left[ 0,1\right] $ for which%
\begin{equation*}
\left\Vert f\right\Vert _{\omega ,1}=\int_{0}^{1}f^{\ast
}\left( t\right) \omega (t)\,dt<\infty \text{,}
\end{equation*}%
where $f^{\ast }$ denotes the decreasing rearrangement of
$\left\vert f\right\vert $ (cf. \cite[p.~l17,
p.~120]{LindestrausTz}). $E$ is a Banach lattice under the norm
$\left\Vert .\right\Vert _{\omega ,1}$ and the standard almost
everywhere pointwise order (i.e., $f\leq g$ if $f\left( x\right)
\leq g\left( x\right) $ a.e.). Note that, since $E$ has the Fatou
property, then it is a maximal rearrangement invariant space \cite[%
Definition 2.a.1]{LindestrausTz}, and therefore $E$ has a predual,
that is, $E=\Phi ^{\prime }$, where $\Phi $ is the closed linear
span of the simple functions in $E_{i}^{\prime }$. Here,
$E_{i}^{\prime }$ stands for the linear subspace of $E^{\prime }$
consisting of all integrals on $E$, i.e.,
functionals $\varphi _{g}\in E^{\prime }$ defined by%
\begin{equation*}
\varphi _{g}\left( f\right) =\int_{0}^{1}f\left( t\right)
g(t)\,dt ,
\end{equation*}%
where $g$ is any measurable function on $\left[ 0,1\right] $ so
that $gf\in L_{1}\left( 0,1\right) $ for every $f\in E$ (for
details about the preceding facts, see \cite[p.~29, p.~118,
p.~121]{LindestrausTz}). Now, it follows from
\cite[p.~231]{expWnuk} that $\Phi $ has the wDP property but not
the DP property nor the positive Schur property}}.
\end{example}

Moreover, for the class of wDP operators and that of almost DP ones, each
one of the two classes is not included in the other. For instance, as the
Lorentz space $\wedge (\omega ,1)$ has the positive Schur property without
the DP property (see \cite[Remark 3]{Wnuk Schurtype}), the identity operator
on $\wedge (\omega ,1)$ is almost DP but not wDP. Conversely, since $c_{0}$
(the space of real sequences $\left( x_{n}\right) $ with $\lim x_{n}=0$) has
the DP property without the the positive Schur property, the identity
operator on $c_{0}$ is wDP but not almost DP.

The present paper is devoted to the class of awDP operators. W.
Wnuk noted in \cite[Example 4 p.~230]{expWnuk} that a positive
operator $T:E\rightarrow F$\ is almost DP if and only if it is a
DP operator, provided that $F$\ is discrete with order continuous
norm. Motivated by this fact, we look at its weak alternative,
that is, when an awDP operator is wDP? As a response, we prove the
following theorem.

\begin{thA}
\label{thA}Let $E$ and $F$ be two Banach lattices. Then, an order bounded
operator $T:E\rightarrow F$ is almost weak Dunford-Pettis if and only if it
is weak Dunford-Pettis, whenever one of the following holds:

\begin{enumerate}
\item[$\left( i\right) $] $E$ has sequentially weakly continuous lattice
operations.

\item[$\left( ii\right) $] $F^{\prime }$ has sequentially weakly continuous
lattice operations.

\item[$\left( iii\right) $] $T$ is positive and $F$ is discrete with order
continuous norm.
\end{enumerate}
\end{thA}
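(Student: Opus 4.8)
The implication ``weak Dunford--Pettis $\Rightarrow$ almost weak Dunford--Pettis'' holds for every operator and was already recorded in the Introduction, so in each of the three cases it suffices to prove that an order bounded awDP operator $T\colon E\to F$ is weak Dunford--Pettis. The plan is to argue by contradiction through the sequential description of wDP operators. If $T$ is not wDP then, since $T$ maps relatively weakly compact sets to DP sets, one gets (via Eberlein--\v{S}mulian, after subtracting the weak limit) norm bounded sequences $(x_n)\subset E$ and $(f_n)\subset F'$ with $x_n\to0$ weakly in $E$, $f_n\to0$ weakly in $F'$, and $|f_n(Tx_n)|\ge\varepsilon$ for all $n$ and some $\varepsilon>0$. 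Everything then comes down to contradicting the awDP hypothesis, which controls $f_n(Tx_n)$ only when $(x_n)$ and $(f_n)$ are both disjoint; the task is to manufacture disjointness.

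I would first record two consequences of $T$ being awDP, valid with no extra hypothesis: \emph{(a)} if $(u_n)$ is a disjoint weakly null sequence in $E$, then $\{Tu_n:n\ge1\}$ is an almost Dunford--Pettis subset of $F$; \emph{(b)} dually, if $(h_n)$ is a disjoint weakly null sequence in $F'$, then $\{T'h_n:n\ge1\}$ is an almost $(L)$-subset of $E'$. Both follow from a routine sliding/diagonal argument: if, say, $\sup_n|g_k(Tu_n)|\not\to0$ for some disjoint weakly null $(g_k)\subset F'$, pick $n_k$ with $|g_k(Tu_{n_k})|\ge\delta$; passing to a subsequence, either $(n_k)$ is strictly increasing, so that $(u_{n_k})$ and $(g_k)$ are disjoint weakly null and awDP is violated, or $(n_k)$ is eventually constant and weak nullity of $(g_k)$ gives a contradiction. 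Next I would bring in the order boundedness of $T$ to pass from ``disjoint on both sides'' to ``disjoint on one side only''. If $T[-a,a]\subset[-b,b]$, then, for every $\phi\in F'$, $|T'\phi|(a)=\sup_{|y|\le a}|\phi(Ty)|\le|\phi|(b)$; combining this with \emph{(a)}, \emph{(b)} and the fact recalled in the Introduction that a disjoint weakly null sequence has weakly null modulus, I expect to obtain: $f_n(Tx_n)\to0$ whenever $(x_n)$ is disjoint weakly null and $(f_n)$ is merely weakly null, and symmetrically whenever $(f_n)$ is disjoint weakly null and $(x_n)$ is merely weakly null.

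Given that reduction, cases (i) and (ii) are closed by genuinely disjointifying the remaining sequence. If a Banach lattice $G$ has sequentially weakly continuous lattice operations and $(z_n)\subset G$ is weakly null, then $(|z_n|)$ is weakly null too, and a Kadec--Pe\l czy\'nski sliding-hump selection produces a subsequence $(z_{n_k})$ and a disjoint sequence $(d_k)\subset G$ with $\|z_{n_k}-d_k\|\to0$; such $(d_k)$ is automatically weakly null. Apply this to $(x_n)$ with $G=E$ in case (i) and to $(f_n)$ with $G=F'$ in case (ii); since the resulting perturbation changes $f_n(Tx_n)$ by at most $\|T\|\sup_k(\|x_k\|+\|f_k\|)\,\|z_{n_k}-d_k\|\to0$, we fall into the ``one side disjoint'' situation just treated, get $f_{n_k}(Tx_{n_k})\to0$ along the subsequence, and --- this holding along a subsequence of every subsequence --- conclude $f_n(Tx_n)\to0$, a contradiction. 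For case (iii) --- $T\ge0$ and $F$ discrete with order continuous norm --- I would follow W.~Wnuk's treatment of almost DP operators \cite[Example~4, p.~230]{expWnuk}: positivity gives $|f_n(Tx_n)|\le|f_n|(T|x_n|)$ while $Tx_n\to0$ weakly in $F$, and the discreteness together with order continuity of $F$ (which make the band projections onto the atoms available and force order bounded disjoint sequences to be norm null) let one replace the data by disjointly supported sequences up to arbitrarily small error, after which \emph{(a)}, \emph{(b)} and the awDP hypothesis again finish the proof.

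The step I expect to be the main obstacle is the trade ``order bounded $+$ awDP $\Rightarrow$ one-sided disjointness suffices'': one has to push the estimate $|T'\phi|(a)\le|\phi|(b)$ past the non-uniformity in $a$ (the vectors one tests against are weakly null but not order bounded), and it is exactly here that order boundedness of $T$, rather than mere boundedness, is used. The disjointifications in (i)--(ii) and the discrete--order-continuous bookkeeping in (iii) follow well-trodden paths; the one thing to watch is that the lower bound $\varepsilon$ survives the small perturbations that are allowed.
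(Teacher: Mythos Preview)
Your route differs from the paper's and carries a real gap. For (i) and (ii) the paper never disjointifies either sequence: with $A=\mathrm{sol}\{x_n\}$ and $B=\mathrm{sol}\{f_n\}$, Theorem~\ref{th carac awDPop} (your (a)/(b)) gives that $T(A)$ is almost DP, so $g_n(Tx_n)\to 0$ for every disjoint $(g_n)\subset B^+$; then the hypothesis in (i) yields $|x_n|\overset{w}{\to}0$, and the Dodds--Fremlin Lemma~\ref{Lemma fondamental}(2) upgrades this to $\sup_{f\in B}|f(Tx_n)|\to 0$. Case (ii) is symmetric via Lemma~\ref{Lemma fondamental}(1). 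Your ``one-side disjointness suffices'' step is true, but proving it already requires exactly this Dodds--Fremlin lemma (your sketch via $|T'\phi|(a)\le|\phi|(b)$ does not close the argument, since the test vectors $x_n$ are not order bounded); once Lemma~\ref{Lemma fondamental} is available, the paper's direct argument finishes and your Kadec--Pe\l czy\'nski detour is unnecessary.

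That detour is also where the gap lies. The assertion that in any Banach lattice $G$ with sequentially weakly continuous lattice operations every weakly null sequence has a subsequence \emph{norm-close} to a disjoint sequence is not a standard result; the usual sliding-hump extraction needs $\|\,|z_n|\wedge u\,\|\to 0$ for each fixed $u\ge 0$, i.e., that order bounded weakly null sequences are norm null, which is order continuity of the norm --- a hypothesis absent in (i), (ii) (think $G=C(K)$). For (iii) the paper again does something different from your sketch: it first proves a lattice approximation (Theorem~\ref{th latt app}) producing, for relatively weakly compact $A\subset E$, $B\subset F'$ and $\varepsilon>0$, some $u\in E^+$ with $|f|\bigl(T(|x|-u)^+\bigr)\le\varepsilon$ for all $x\in A$, $f\in B$; this, together with the fact that a discrete $F$ with order continuous norm has sequentially weakly continuous lattice operations and order weakly compact identity, gives $f_n(Tx_n)\to 0$ via the Claim in Section~3.
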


For that purpose, we present some new characterisations of awDP operators
through almost DP (resp. almost (L)-) sets and some lattice approximations
(Sect. 2). Next, we give the proof of Theorem A and we derive some
consequences (Sect. 3).

We refer the reader to \cite{AB3, MN} for more details on Banach lattice
theory and positive operators.

\section{Characterisation of almost weak Dunford-Pettis operators}

We start this paper by the following lemma which is just a
particular case of Theorem~2.4 of \cite{DF}.

\begin{lemma}
\label{DF}Let $E$ be a Banach lattice, and let $\left( f_{n}\right) \subset
E^{\prime }$ be a sequence with $\left\vert f_{n}\right\vert \overset{%
w^{\ast }}{\rightarrow }0$. If $A\subset E$ is a norm bounded and solid set
such that $f_{n}(x_{n})\rightarrow 0$ for every disjoint sequence $\left(
x_{n}\right) \subset A^{+}:=A\cap E^{+}$, then $\sup_{x\in
A}\left\vert f_{n}\right\vert \left( x\right) \rightarrow 0$.
\end{lemma}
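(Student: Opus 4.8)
The plan is to argue by contradiction after first reducing to positive functionals. Put $g_n:=|f_n|\ge 0$, so that $g_n\overset{w^{\ast}}{\rightarrow}0$ is precisely the hypothesis. The first thing I would check is that the disjoint-sequence condition also holds for $(g_n)$: given a disjoint sequence $(x_n)\subset A^{+}$, write $g_n(x_n)=f_n^{+}(x_n)+f_n^{-}(x_n)$ and use the Riesz--Kantorovich formulas $f_n^{+}(x_n)=\sup\{f_n(u):0\le u\le x_n\}$ and $f_n^{-}(x_n)=\sup\{-f_n(u):0\le u\le x_n\}$ to pick $0\le u_n,v_n\le x_n$ with $f_n(u_n)>f_n^{+}(x_n)-2^{-n}$ and $-f_n(v_n)>f_n^{-}(x_n)-2^{-n}$. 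Since $0\le u_n,v_n\le x_n$, both $(u_n)$ and $(v_n)$ are disjoint sequences, and by solidity of $A$ they lie in $A^{+}$; hence $f_n(u_n)\to 0$ and $f_n(v_n)\to 0$, which forces $g_n(x_n)\to 0$. Moreover, solidity gives $\sup_{x\in A}|f_n|(x)=\sup_{x\in A^{+}}g_n(x)$ (for $x\in A$ one has $|x|\in A^{+}$ and $|f_n|(x)\le g_n(|x|)$, while $A^{+}\subset A$), so it is enough to prove $\sup_{x\in A^{+}}g_n(x)\to 0$.

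Suppose this fails. Then, after passing to a subsequence, there are $\varepsilon>0$ and $x_n\in A^{+}$ with $g_n(x_n)\ge\varepsilon$ for all $n$, and $M:=\sup_n\|x_n\|<\infty$ since $A$ is norm bounded. The goal is to extract from $(x_n)$ a pairwise disjoint sequence $(y_k)\subset A^{+}$ along which $(g_{n_k})$ fails to converge uniformly to $0$; this contradicts the hypothesis, once one notes that the hypothesis is inherited by every subsequence of $(g_n)$ (given a disjoint sequence in $A^{+}$ indexed by $(n_k)$, insert the element $0\in A^{+}$, available by solidity, at the remaining indices to obtain an ordinary disjoint sequence in $A^{+}$).

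The core of the proof — and the step I expect to be the main obstacle — is this disjointification: find a subsequence $(n_k)$ and pairwise disjoint $y_k$ with $0\le y_k\le x_{n_k}$ (so $y_k\in A^{+}$ by solidity) and $g_{n_k}(x_{n_k}-y_k)\to 0$, whence $g_{n_k}(y_k)=g_{n_k}(x_{n_k})-g_{n_k}(x_{n_k}-y_k)\ge \varepsilon-g_{n_k}(x_{n_k}-y_k)\to\varepsilon$, so $g_{n_k}(y_k)\ge\varepsilon/2$ for large $k$. I would build $(n_k)$ and $(y_k)$ recursively by a gliding-hump procedure: having fixed $y_1,\dots,y_{k-1}$, set $u:=y_1+\dots+y_{k-1}\in E^{+}$, use $g_n\overset{w^{\ast}}{\rightarrow}0$ to choose $n_k>n_{k-1}$ so large that the interaction of $x_{n_k}$ with the fixed element $u$ is negligible for $g_{n_k}$, and then peel from $x_{n_k}$ a piece $y_k$ disjoint from $u$ that retains almost all of $g_{n_k}(x_{n_k})$, while letting the $n_k$ grow fast enough that the discarded pieces do not accumulate. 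The delicate point is that a general Banach lattice need not carry the band projections that would make ``removing the overlap with $u$'' work in a single stroke, so this construction requires the careful bookkeeping typical of Dodds--Fremlin-type arguments; this is exactly what is isolated, in the form needed here, in Theorem~2.4 of \cite{DF}, so in practice I would simply quote it, as the authors do. With the disjoint sequence $(y_k)\subset A^{+}$ satisfying $g_{n_k}(y_k)\ge\varepsilon/2$ in hand, the contradiction with the (subsequential) hypothesis is immediate, and the lemma follows.
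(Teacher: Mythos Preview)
Your proposal is correct and, like the paper, ultimately rests on Theorem~2.4 of \cite{DF}; the paper in fact gives no independent argument but simply records the lemma as a particular case of that theorem. Your preliminary reductions (passing to $g_n=|f_n|$ via the Riesz--Kantorovich formulas, reducing the supremum to $A^{+}$, and the subsequence bookkeeping) are accurate and your outline of the gliding-hump disjointification captures the spirit of the Dodds--Fremlin argument, but all of this goes beyond what the paper itself provides.
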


We will use throughout this paper the following lemma.

\begin{lemma}
\label{Lemma fondamental}Let $T:E\rightarrow F$ be an order bounded operator
between two Banach lattices, and let $A$ and $B$ be respectively a norm
bounded solid subsets of $E$ and $F^{\prime }$. Then, the following holds:

\begin{enumerate}
\item If the sequence $\left( f_{n}\right) \subset F^{\prime }$ satisfy $%
\left\vert f_{n}\right\vert \overset{w^{\ast }}{\rightarrow }0$ and $%
f_{n}\left( Tx_{n}\right) \rightarrow 0$ for every disjoint sequence $\left(
x_{n}\right) \subset A^{+}$, then $\left( f_{n}\right) $ converges uniformly
to zero on $T\left( A\right) $.

\item If the sequence $\left( x_{n}\right) \subset E$ satisfy $\left\vert
x_{n}\right\vert \overset{w}{\rightarrow }0$ and $f_{n}\left( Tx_{n}\right)
\rightarrow 0$ for every disjoint sequence $\left( f_{n}\right) \subset
B^{+} $, then $\left( x_{n}\right) $ converges uniformly to zero on $%
T^{\prime }\left( B\right) $.
\end{enumerate}
\end{lemma}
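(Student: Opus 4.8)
The plan is to reduce each of the two statements to the preceding Lemma \ref{DF} (the specialization of Theorem 2.4 of \cite{DF}) by transporting the hypotheses through the operator $T$ and, in the second case, through its adjoint $T'$. For part (1), the key observation is that since $T$ is order bounded, for a fixed disjoint sequence $(x_n)\subset A^+$ lying in an order interval $[0,a]$, the images $T x_n$ lie in the order interval $[T x_n^{-}\!-\!T x_n^{+}, \dots]$; more usefully, one works directly with the functionals. Set $g_n := T'(f_n) \in E'$. Then $g_n(x_n) = f_n(Tx_n)$, so the hypothesis says $g_n(x_n)\to 0$ for every disjoint $(x_n)\subset A^+$. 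The point needing care is that $|f_n|\xrightarrow{w^*}0$ in $F'$ does \emph{not} immediately give $|g_n|\xrightarrow{w^*}0$ in $E'$, because $T'$ need not be a lattice homomorphism and need not be order continuous. I would instead avoid passing the modulus through $T'$: I would apply Lemma \ref{DF} not to $(g_n)$ on the whole of $A$, but rather re-derive what is needed. Concretely, fix $a\in E^+$ with $A\subset \mathrm{sol}(\{a\})$ (after enlarging, one may assume $A=[-a,a]$ since $A$ is norm bounded and solid — actually $A$ need not be contained in an order interval, so one works with $\mathrm{sol}(A)$ and uses that disjoint sequences in $\mathrm{sol}(A)^+$ are dominated termwise by disjoint-supported elements of $A^+$, up to sign). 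The cleaner route: observe $\sup_{x\in A}|f_n(Tx)| = \sup_{x\in A}|g_n(x)| \le \sup_{x\in \mathrm{sol}(A)}|g_n|(\cdot)$ only if $|g_n|$ controls things, which is exactly what we must establish. So I would instead apply Lemma \ref{DF} with $E$ replaced by... no — the honest approach is: since $A$ is solid and norm bounded, $|f_n(Tx)| \le |f_n|(|Tx|) \le |f_n|(T^{r}|x|)$ would need $T$ regular with modulus $T^r$; as $T$ is merely order bounded between Banach lattices with $F$ not necessarily Dedekind complete, this is the genuine obstacle.

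I expect the actual argument to circumvent the modulus of $T$ as follows. Because $A$ is solid, for any $x\in A$ we have $x^+, x^- \in A$ with $x^+\wedge x^- = 0$, so it suffices to bound $\sup_{x\in A^+} |f_n(Tx)|$; and for $x\in A^+$, writing $Tx = (Tx)^+ - (Tx)^-$ we get $|f_n(Tx)| \le |f_n|((Tx)^+) + |f_n|((Tx)^-) \le 2\,|f_n|(|Tx|)$. Now the set $T(A^+)$ need not be solid in $F$, but its solid hull $\mathrm{sol}(T(A^+))$ is norm bounded (here order boundedness of $T$ is used: $T$ maps the order-bounded-looking pieces appropriately — more precisely one uses that $T$ is order bounded together with $A$ being contained in a multiple of the unit ball to get $\mathrm{sol}(T(A^+))$ norm bounded via the closed graph / standard estimate). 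I would then apply Lemma \ref{DF} to the Banach lattice $F$, the sequence $(f_n)\subset F'$ (which satisfies $|f_n|\xrightarrow{w^*}0$ by hypothesis), and the norm bounded solid set $A' := \mathrm{sol}(T(A^+))\subset F$. The hypothesis of Lemma \ref{DF} to verify is: $f_n(y_n)\to 0$ for every disjoint sequence $(y_n)\subset (A')^+$. Given such $(y_n)$, each $y_n$ satisfies $0\le y_n \le |Tx_n|$ for some $x_n\in A^+$, with $(y_n)$ disjoint; by a disjointification/Riesz-decomposition argument one produces a disjoint sequence $(z_n)\subset A^+$ (essentially components of the $x_n$) with $f_n(y_n)$ controlled by $f_n(Tz_n)\to 0$. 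The main obstacle is precisely this last step — lifting a disjoint sequence downstairs (in $F$, dominated by $|Tx_n|$) to a disjoint sequence upstairs (in $A^+\subset E$) so that the given hypothesis $f_n(Tx_n)\to 0$ on disjoint sequences applies. This is where one needs the order boundedness of $T$ in an essential way, presumably via the Kantorovich-type result that disjoint elements below $|Tx|$ can be pulled back to disjoint elements below $|x|$ when $T$ is order bounded (cf. the techniques behind Theorem 2.4 of \cite{DF} itself).

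For part (2), I would run the dual version of the same scheme. Put $y_n := T'(f_n)\in E'$... no: here $(x_n)\subset E$ with $|x_n|\xrightarrow{w}0$ and we want uniform convergence of $(x_n)$ on $T'(B)\subset E'$, i.e. $\sup_{f\in B}|f(Tx_n)| = \sup_{f\in B}|(T'f)(x_n)| \to 0$. Consider the sequence $(\widehat{x_n})\subset E''$ of canonical images; then $|\widehat{x_n}|\xrightarrow{w^*}0$ in $E''$ (since $|x_n|\to 0$ weakly in $E$ and the embedding is a lattice isometry onto its image, the moduli converge weak-* in $E''$). Apply Lemma \ref{DF} in the Banach lattice $E'$ with this weak-* null (in moduli) sequence $(\widehat{x_n})\subset (E')' = E''$ and with the norm bounded solid set $B' := \mathrm{sol}(T'(B))\subset E'$, which is norm bounded because $T'$ is order bounded (the adjoint of an order bounded operator is order bounded — indeed $T$ order bounded $\Rightarrow$ $T'$ order bounded by the standard duality). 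The hypothesis to check for Lemma \ref{DF} is $\widehat{x_n}(g_n)\to 0$, i.e. $g_n(x_n)\to 0$, for every disjoint sequence $(g_n)\subset (B')^+$; and $(B')^+ \subset \mathrm{sol}(T'(B))^+$, so each $g_n$ satisfies $0\le g_n\le |T'f_n|$ for some $f_n\in B$, with $(g_n)$ disjoint. Again the crux is pulling this disjoint sequence in $E'$ back to a disjoint sequence $(\varphi_n)\subset B^+$ with $g_n(x_n)$ controlled by $\varphi_n(Tx_n)=f_n(Tx_n)$-type terms, which tend to $0$ by the hypothesis of part (2). The conclusion of Lemma \ref{DF}, $\sup_{g\in B'}|\widehat{x_n}|(g)\to 0$, then gives in particular $\sup_{f\in B}|f(Tx_n)| = \sup_{f\in B}|(T'f)(x_n)| \le \sup_{g\in B'}|g(x_n)| \le \sup_{g\in B'}|\widehat{x_n}|(g) \to 0$, which is exactly the asserted uniform convergence of $(x_n)$ on $T'(B)$. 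I anticipate the disjointification-pullback step to be the one requiring the most care in both parts, and I would isolate it as the technical heart of the proof, relying on the order boundedness of $T$ (resp. $T'$) precisely there.
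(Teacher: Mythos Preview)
Your proposal has a genuine gap: the ``disjointification-pullback'' step you identify as the technical heart is neither carried out nor, as stated, clearly feasible --- and more importantly it is unnecessary, because you dismissed the correct route too quickly.

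You write that $|f_n|\xrightarrow{w^*}0$ in $F'$ ``does \emph{not} immediately give $|g_n|\xrightarrow{w^*}0$ in $E'$'' for $g_n=T'(f_n)$. In fact it does, and this is exactly where the order boundedness of $T$ enters. For $x\in E^+$ choose $y\in F^+$ with $T[-x,x]\subseteq[-y,y]$; then
\[
|T'(f_n)|(x)=\sup\{|f_n(Tu)|:|u|\le x\}\le |f_n|(y)\to 0,
\]
so $|T'(f_n)|\xrightarrow{w^*}0$ in $E'$. No lattice-homomorphism property of $T'$ is needed. With this in hand, Lemma~\ref{DF} applies \emph{in $E$} to the sequence $(T'(f_n))\subset E'$ and the already-solid set $A\subset E$: the hypothesis $T'(f_n)(x_n)=f_n(Tx_n)\to 0$ for disjoint $(x_n)\subset A^+$ is exactly what is assumed, and the conclusion $\sup_{x\in A}|T'(f_n)|(x)\to 0$ dominates $\sup_{x\in A}|f_n(Tx)|$. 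No solid hull of $T(A^+)$ in $F$ is formed, and no pullback of disjoint sequences from $F$ to $E$ is required.

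Part~(2) is handled symmetrically in the paper: one first shows $|Tx_n|\xrightarrow{w}0$ in $F$ (using that $T'$ is order bounded, so for $f\in (F')^+$ one has $T'[-f,f]\subseteq[-g,g]$ and then $f(|Tx_n|)\le g(|x_n|)\to 0$), and then applies Lemma~\ref{DF} \emph{in $F'$} to the sequence $(j(Tx_n))\subset F''$ and the given solid set $B\subset F'$ --- again, no solid hull of $T'(B)$ in $E'$ and no pullback. Your attempt to work with $\mathrm{sol}(T'(B))\subset E'$ creates an artificial obstacle (lifting disjoint $(g_n)$ with $0\le g_n\le |T'f_n|$ to disjoint $(\varphi_n)\subset B^+$) that you do not resolve and that the paper never faces.
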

\begin{proof}
$(1)$ We claim that $\left\vert T^{\prime }\left( f_{n}\right)
\right\vert \overset{w^{\ast }}{\rightarrow }0$ holds in
$E^{\prime }$. Let $x\in E^{+}$ and pick some $y\in F^{+}$ such
that $T\left[ -x,x\right] \subseteq \left[ -y,y\right] $. Thus
$$
\left\vert T^{\prime }\left( f_{n}\right) \right\vert
\left( x\right)
=
\sup \left\{ \left\vert T^{\prime }\left( f_{n}\right) \left(
u\right) \right\vert :\left\vert u\right\vert \leq x\right\}
=
\sup \left\{ \left\vert f_{n}\left( T\left( u\right) \right)
\right\vert :\left\vert u\right\vert \leq x\right\} \leq
\left\vert f_{n}\right\vert \left( y\right) \text{.}
$$

Since $\left\vert f_{n}\right\vert \overset{w^{\ast }}{\rightarrow }0$, we
have $\left\vert f_{n}\right\vert \left( y\right) \rightarrow 0$ and hence $%
\left\vert T^{\prime }\left( f_{n}\right) \right\vert \left(
x\right) \rightarrow 0$. Therefore \break
  $\left\vert T^{\prime
}\left( f_{n}\right) \right\vert \overset{w^{\ast }}{\rightarrow
}0$. On the other hand, by hypothesis $T^{\prime }\left(
f_{n}\right) \left( x_{n}\right) =f_{n}\left( Tx_{n}\right)
\rightarrow 0$ for every disjoint sequence $\left(
x_{n}\right) \subset A^{+}$. Then, applying Lemma \ref{DF}, we get $%
\sup_{x\in A}\left\vert T^{\prime }\left( f_{n}\right) \right\vert
\left( x\right) \rightarrow 0$. Now, from the inequality%
\begin{equation*}
\sup_{y\in T\left( A\right) }\left\vert f_{n}\left( y\right)
\right\vert =\sup_{x\in A}\left\vert f_{n}\left( Tx\right)
\right\vert \leq \sup_{x\in A}\left\vert T^{\prime }\left(
f_{n}\right) \right\vert \left( x\right) ,
\end{equation*}%
we conclude that $\sup_{y\in T\left( A\right) }\left\vert f_{n}\left(
y\right) \right\vert \rightarrow 0$ and we are done.

$(2)$ We claim that $\left\vert Tx_{n}\right\vert
\overset{w}{\rightarrow }0$ holds in $F$. Let $f\in \left(
F^{\prime }\right) ^{+}$. By Theorem 1.73 of \cite{AB3},
$T^{\prime }:F^{\prime }\rightarrow E^{\prime }$ is order bounded.
So there exists some $g\in \left( E^{\prime }\right) ^{+}$ such
that $T^{\prime }\left[ -f,f\right] \subseteq \left[ -g,g\right]
$. For each $n$ pick $\left\vert f_{n}\right\vert \leq f$ with
$f\left( \left\vert Tx_{n}\right\vert \right) =f_{n}\left(
Tx_{n}\right) =T^{\prime }\left( f_{n}\right) \left( x_{n}\right)
$ (see Theorem~1.23~\cite{AB3}). Thus, for each $n$, we have
\begin{equation*}
f\left( \left\vert T\left( x_{n}\right) \right\vert \right) =f_{n}\left(
Tx_{n}\right) \leq \left\vert T^{\prime }\left( f_{n}\right) \right\vert
\left( \left\vert x_{n}\right\vert \right) \leq g\left( \left\vert
x_{n}\right\vert \right) \text{.}
\end{equation*}

Since $\left\vert x_{n}\right\vert \overset{w}{\rightarrow }0$, we have $%
g\left( \left\vert x_{n}\right\vert \right) \rightarrow 0$ and hence $%
f\left( \left\vert Tx_{n}\right\vert \right) \rightarrow 0$. Therefore $%
\left\vert Tx_{n}\right\vert \overset{w}{\rightarrow }0$ holds in $F$. On
the other hand, if $j:F\rightarrow F^{\prime \prime }$ is the natural
embedding, then $\left\vert j\left( Tx_{n}\right) \right\vert =j\left(
\left\vert Tx_{n}\right\vert \right) \overset{w^{\ast }}{\rightarrow }0$
holds in $F^{\prime \prime }$. Also, by hypothesis $j\left( Tx_{n}\right)
\left( f_{n}\right) =f_{n}\left( Tx_{n}\right) \rightarrow 0$ for every
disjoint sequence $\left( f_{n}\right) \subset B^{+}$. Then, applying Lemma %
\ref{DF} for the sequence $\left( j\left( Tx_{n}\right) \right) \subset
F^{\prime \prime }$ and the solid subset $B\subset F^{\prime }$, we get $%
\sup_{f\in B}\left\vert j\left( Tx_{n}\right) \right\vert
\left( f\right) \rightarrow 0$. Now, from
\begin{eqnarray*}
\sup_{g\in T^{\prime }\left( B\right) }\left\vert
g\left( x_{n}\right) \right\vert
=
\sup_{f\in B}\left\vert f\left( Tx_{n}\right) \right\vert
\leq \sup_{f\in B}f\left( \left\vert T\left( x_{n}\right)
\right\vert \right)
&= \sup_{f\in B}\left( j\left\vert
Tx_{n}\right\vert \right) \left( f\right)
\\
&= \sup_{f\in
B}\left\vert j\left( Tx_{n}\right) \right\vert \left( f\right)
\text{,}
\end{eqnarray*}%
we conclude that $\sup_{g\in T^{\prime }\left( B\right) }\left\vert
g\left( x_{n}\right) \right\vert \rightarrow 0$. This completes the proof.
\end{proof}

The next result gives a new characterisations of order bounded awDP
operators between Banach lattices, through the almost DP (resp. almost (L)-)
sets.

\begin{theorem}
\label{th carac awDPop}Let $T:E\rightarrow F$ be an order bounded operator
between two Banach lattices. Then the following assertions are equivalent:

\begin{enumerate}
\item $T$ is an awDP operator.

\item $T$ carries the solid hull of each relatively weakly compact subset of
$E$ to an almost DP set in $F$.

\item $T$ carries each relatively weakly compact subset of $E$ to an almost
DP set in $F$.

\item $T^{\prime }$ carries the solid hull of each relatively weakly compact
subset of $F^{\prime }$ to an almost (L)-set in $E^{\prime }$.

\item $T^{\prime }$ carries each relatively weakly compact subset of $%
F^{\prime }$ to an almost (L)-set in $E^{\prime }$.
\end{enumerate}
\end{theorem}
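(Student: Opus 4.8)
The plan is to prove the cyclic chain of implications $(1)\Rightarrow(2)\Rightarrow(3)\Rightarrow(1)$ together with $(1)\Leftrightarrow(4)\Leftrightarrow(5)$, using Lemma~\ref{Lemma fondamental} as the workhorse to convert a pointwise statement about disjoint sequences into a uniform statement on the images of solid sets. The implications $(2)\Rightarrow(3)$ and $(4)\Rightarrow(5)$ are trivial since any relatively weakly compact set $W$ satisfies $W\subseteq\mathrm{sol}(W)$ (after replacing $W$ by a norm-bounded set that still contains it, $\mathrm{sol}(W)$ is norm bounded because the lattice norm is a lattice norm). So the content lies in $(1)\Rightarrow(2)$, $(3)\Rightarrow(1)$, and the analogous pair for the adjoint.

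For $(1)\Rightarrow(2)$: fix a relatively weakly compact $W\subseteq E$ and a disjoint weakly null sequence $(f_n)\subset F^{\prime}$; I must show $f_n\to 0$ uniformly on $T(\mathrm{sol}(W))$. Since $(f_n)$ is disjoint and weakly null we get $|f_n|\xrightarrow{w^\ast}0$ (in fact $|f_n|\xrightarrow{w}0$, hence $w^\ast$-null) by the disjointness fact quoted in the introduction, so the hypothesis of Lemma~\ref{Lemma fondamental}(1) with $A:=\mathrm{sol}(W)$ is half-satisfied. It remains to check that $f_n(Tx_n)\to 0$ for every disjoint sequence $(x_n)\subset A^+=\mathrm{sol}(W)\cap E^+$. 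Here is the key point: a disjoint sequence contained in the solid hull of a relatively weakly compact set is itself weakly null. Indeed, each $x_n$ satisfies $0\le x_n\le|w_n|$ for some $w_n\in W$; since $\{|w_n|\}$ lies in a relatively weakly compact set (the solid hull of a relatively weakly compact set in a Banach lattice is relatively weakly compact, by a theorem of Abramovich/de~Pagter or its standard form in Aliprantis--Burkinshaw), a disjoint order-bounded-by-weakly-compact sequence is weakly null by the classical result that disjoint sequences dominated by a relatively weakly compact set converge weakly to zero. Given $x_n\xrightarrow{w}0$, disjoint, and $f_n\xrightarrow{w}0$, disjoint, the awDP hypothesis gives $f_n(Tx_n)\to 0$ exactly as required. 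Applying Lemma~\ref{Lemma fondamental}(1) yields uniform convergence on $T(A)=T(\mathrm{sol}(W))$, i.e. $T(\mathrm{sol}(W))$ is an almost DP set.

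For $(3)\Rightarrow(1)$: let $(x_n)\subset E$ be disjoint weakly null and $(f_n)\subset F^{\prime}$ disjoint weakly null; I want $f_n(Tx_n)\to 0$. By the disjointness fact, $|x_n|\xrightarrow{w}0$, so $\{x_n\}$ is relatively weakly compact; hence $T(\{x_n\})=\{Tx_n\}$ is an almost DP set by $(3)$. Since $(f_n)$ is a disjoint weakly null sequence in $F^{\prime}$, by definition of almost DP set it converges uniformly — in particular pointwise along the matching index — to zero on $\{Tx_n\}$, giving $|f_n(Tx_n)|\le\sup_{m}|f_n(Tx_m)|\to 0$. This closes the loop $(1)\Leftrightarrow(2)\Leftrightarrow(3)$. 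The chain $(1)\Leftrightarrow(4)\Leftrightarrow(5)$ is entirely parallel: $(1)\Rightarrow(4)$ uses Lemma~\ref{Lemma fondamental}(2) with $B:=\mathrm{sol}(V)$ for $V\subset F^{\prime}$ relatively weakly compact, noting that a disjoint sequence $(f_n)\subset B^+$ is weakly null (solid hull of a relatively weakly compact set in $F^{\prime}$ is relatively weakly compact; disjoint dominated sequences are weakly null), and that $|x_n|\xrightarrow{w}0$ for the relevant $(x_n)$; then awDP gives $f_n(Tx_n)\to 0$ and the lemma yields uniform convergence of $(x_n)$ on $T^{\prime}(B)$, i.e. $T^{\prime}(B)$ is an almost (L)-set. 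The converse $(5)\Rightarrow(1)$ mirrors $(3)\Rightarrow(1)$: a disjoint weakly null $(f_n)\subset F^{\prime}$ forms a relatively weakly compact set, so $T^{\prime}(\{f_n\})$ is an almost (L)-set, and a disjoint weakly null $(x_n)\subset E$ converges uniformly to zero on it, forcing $f_n(Tx_n)=(T^{\prime}f_n)(x_n)\to 0$.

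The step I expect to be the main obstacle is justifying that \emph{disjoint sequences living in the solid hull of a relatively weakly compact set are weakly null}; everything else is either Lemma~\ref{Lemma fondamental}, the disjointness lemma from the introduction, or the trivial inclusion $W\subseteq\mathrm{sol}(W)$. This hinges on two facts I would need to cite precisely from \cite{AB3}: first, that the solid hull of a relatively weakly compact subset of a Banach lattice is again relatively weakly compact (this requires some care — it is true but uses a nontrivial theorem; alternatively one can bypass it by working directly with the domination $0\le x_n\le|w_n|$ and applying the disjoint-sequence criterion for weak compactness), and second, the standard result that if $(x_n)$ is disjoint and order bounded in absolute value by elements of a relatively weakly compact set then $x_n\xrightarrow{w}0$. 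If the clean ``solid hull is weakly compact'' statement is unavailable in the exact form wanted, the fallback is to invoke \cite[Theorem~4.34 or Exercise]{AB3} characterizing relative weak compactness via uniform integrability and disjoint sequences, applied to the set $\{|w_n|\}$.
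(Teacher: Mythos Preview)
Your approach is essentially the same as the paper's: the same cycle of implications, the same use of Lemma~\ref{Lemma fondamental}, and the same key ingredient that disjoint sequences in the solid hull of a relatively weakly compact set are weakly null.

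One correction, though, concerning what you flag as ``the main obstacle.'' Your primary justification --- that the solid hull of a relatively weakly compact set is again relatively weakly compact --- is \emph{false} in a general Banach lattice (this is precisely the question studied in \cite{chen rwcp}; it holds, e.g., when the norm is order continuous, but not in general). Fortunately this stronger statement is not needed: the fact you actually require, that every disjoint sequence in $\mathrm{sol}(A)$ with $A$ relatively weakly compact is weakly null, is exactly the content of \cite[Theorem~4.34]{AB3}, and the paper simply cites it. So drop the attempted detour through relative weak compactness of $\mathrm{sol}(A)$ and invoke Theorem~4.34 directly, as your own fallback paragraph already suggests; with that adjustment your proof coincides with the paper's.
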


\begin{proof}
$\left( 1\right) \Rightarrow \left( 2\right) $ Let $A$ be a relatively
weakly compact subset of $E$ and let $\left( f_{n}\right) \subset F^{\prime
} $ be a disjoint weakly null sequence. By Theorem 4.34 of \cite{AB3} if $%
\left( x_{n}\right) \subset \left( \mathrm{sol}\left( A\right) \right) ^{+}$
is a disjoint sequence, then $x_{n}\overset{w}{\rightarrow }0$ and hence by
our hypothesis $f_{n}(Tx_{n})\rightarrow 0$. Now, since $\left\vert
f_{n}\right\vert \overset{w}{\rightarrow }0$ by Lemma \ref{Lemma fondamental}
we conclude that $\sup_{x\in T\left( \mathrm{sol}\left( A\right)
\right) }\left\vert f_{n}\left( x\right) \right\vert \rightarrow 0$.
Therefore $T\left( \mathrm{sol}\left( A\right) \right) $ is an almost DP set.

$\left( 2\right) \Rightarrow \left( 3\right) $ Obvious.

$\left( 3\right) \Rightarrow \left( 1\right) $ Let $\left( x_{n}\right)
\subset E$ , $\left( f_{n}\right) \subset F^{\prime }$ be two disjoint
weakly null sequences. It follows that the set $\left\{ Tx_{n}:n\in \mathbb{N%
}\right\} $ is an almost DP. Therefore, $\sup_{k}\left\vert
f_{n}\left( Tx_{k}\right) \right\vert \rightarrow 0$ as $n\rightarrow \infty
$. Now, from the inequality $\sup_{k}\left\vert f_{n}\left(
Tx_{k}\right) \right\vert \geq \left\vert f_{n}\left( Tx_{n}\right)
\right\vert $ we see that $f_{n}\left( Tx_{n}\right) \rightarrow 0$, and
hence $T$ is an awDP operator.

$\left( 1\right) \Rightarrow \left( 4\right) $ Let $B$ be a relatively
weakly compact subset of $F^{\prime }$ and let $\left( x_{n}\right) \subset
E $ be a disjoint weakly null sequence. Similarly, we have $\left\vert
x_{n}\right\vert \overset{w}{\rightarrow }0$ and for each disjoint sequence $%
\left( f_{n}\right) \subset \left( \mathrm{sol}\left( B\right) \right) ^{+}$%
, $f_{n}\overset{w}{\rightarrow }0$ and Thus by hypothesis $%
f_{n}(Tx_{n})\rightarrow 0$. We infer by Lemma \ref{Lemma fondamental} that $%
\sup_{f\in T^{\prime }\left( \mathrm{sol}\left( B\right) \right)
}\left\vert f\left( x_{n}\right) \right\vert \rightarrow 0$ , i.e., $%
T^{\prime }\left( \mathrm{sol}\left( B\right) \right) $ is an almost (L)-set.

$\left( 4\right) \Rightarrow \left( 5\right) $ Obvious.

$\left( 5\right) \Rightarrow \left( 1\right) $ Let $\left( x_{n}\right)
\subset E$ , $\left( f_{n}\right) \subset F^{\prime }$ be two disjoint
weakly null sequences. Since the set $\left\{ T^{\prime }\left( f_{n}\right)
:n\in \mathbb{N}\right\} $ is an almost (L)-set, it follows by the same
justification in $\left( 3\right) \Rightarrow \left( 1\right) $ that $%
f_{n}\left( Tx_{n}\right) \rightarrow 0$, and hence $T$ is an awDP operator.
\end{proof}

The set characterisations in the above theorem enable us to derive
the following result.

\begin{corollary}
\label{th product awDP} In the class of all order bounded operators from $E$
into $E$, the order bounded awDP operators from $E$ into $E$ form a closed
two-sided ideal.
\end{corollary}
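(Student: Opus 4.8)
The plan is to show that the set $\mathcal{A}$ of order bounded awDP operators from $E$ into $E$ is (a) a linear subspace, (b) a two-sided ideal inside the algebra of order bounded operators on $E$, and (c) closed in the operator norm. For linearity, I would work directly from the defining sequence condition: if $S,T$ are awDP and $x_n \xrightarrow{\sigma(E,E')} 0$ with $x_n \perp x_m$, and $f_n \xrightarrow{\sigma(E,E')} 0$ with $f_n \perp f_m$, then $f_n((\alpha S + \beta T)x_n) = \alpha f_n(Sx_n) + \beta f_n(Tx_n) \to 0$; order boundedness of $\alpha S + \beta T$ is clear since order bounded operators form a vector space. So $\mathcal{A}$ is a subspace.

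For the ideal property, I would use the set characterisation from Theorem \ref{th carac awDPop}, specifically the equivalence $(1)\Leftrightarrow(3)$. Let $T$ be an order bounded awDP operator and let $R$ be any order bounded operator on $E$. To see $RT$ is awDP: if $A\subseteq E$ is relatively weakly compact, then $T(A)$ is an almost DP set in $E$, and one checks that the image of an almost DP set under a bounded operator is again an almost DP set — indeed if $B$ is almost DP and $(g_n)\subset E'$ is a disjoint weakly null sequence, then $(R'g_n)$ is weakly null in $E'$, and $\sup_{x\in B}|g_n(Rx)| = \sup_{x\in B}|(R'g_n)(x)|$; but $(R'g_n)$ need not be disjoint, so here one must instead observe that $R(B)$ being relatively weakly compact is not what is needed — rather one uses that almost DP sets are mapped to almost DP sets by bounded operators whose adjoint sends disjoint weakly null sequences appropriately. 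The cleaner route for $RT$: $R(T(A))$ — since $R$ is order bounded we may invoke Lemma \ref{Lemma fondamental}(1) with the solid hull, or simply note $RT$ satisfies the sequence definition directly: $(x_n)$ disjoint weakly null forces $Tx_n$ to behave well, and $f_n\mapsto R'f_n$ is weakly null so $f_n(RTx_n) = (R'f_n)(Tx_n)$; again disjointness of $(R'f_n)$ may fail, so I would instead argue via $(3)$: $T(A)$ almost DP, and $R$ maps almost DP sets to almost DP sets because an almost DP set has the property that every disjoint weakly null $(g_n)$ converges uniformly on it, and $\{Rx : x\in T(A)\}$ tested against disjoint weakly null $(g_n)$ gives $\sup|g_n(Rx)| = \sup|R'g_n(x)|$ over $x\in T(A)$ — and since $T(A)$ is in particular relatively weakly compact? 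No. So I will instead prove a small auxiliary fact: the image of an almost DP set under any bounded operator is almost DP, using that $R'g_n$ is weakly null, and applying the almost-DP property of $T(A)$ only needs a \emph{disjoint} weakly null sequence; when $R'g_n$ is not disjoint, decompose or pass to the fact that $T(A)$ almost DP $\Rightarrow$ bounded, and combine with Lemma \ref{Lemma fondamental}. For $TR$: if $A$ is relatively weakly compact then $R(A)$ is relatively weakly compact (bounded operators preserve relative weak compactness), hence $T(R(A))$ is almost DP by $(3)$, so $TR$ is awDP. This half is immediate; the $RT$ half is the one requiring the auxiliary lemma that bounded operators map almost DP sets to almost DP sets.

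For norm-closedness, suppose $T_k \to T$ in operator norm with each $T_k$ order bounded awDP. Order boundedness of $T$ is not automatic from norm limits in general — but here I would either restrict attention to the regular norm or note that since we work \emph{within} the space of order bounded operators and the statement asserts closedness as a subset of that space (with its natural norm, which for the purpose of the corollary we may take to be the operator norm), we are given $T$ order bounded. Then to show $T$ is awDP: take $x_n \xrightarrow{\sigma} 0$ disjoint, $f_n \xrightarrow{\sigma} 0$ disjoint. Both $(x_n)$ and $(f_n)$ are norm bounded, say by $M$ and $M'$. Given $\varepsilon>0$, pick $k$ with $\|T_k - T\| < \varepsilon/(2MM')$; then $|f_n(Tx_n)| \le |f_n(T_kx_n)| + |f_n((T-T_k)x_n)| \le |f_n(T_kx_n)| + \|f_n\|\,\|T-T_k\|\,\|x_n\| \le |f_n(T_kx_n)| + \varepsilon/2$, and since $T_k$ is awDP, $|f_n(T_kx_n)| < \varepsilon/2$ for $n$ large. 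Hence $f_n(Tx_n)\to 0$.

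The main obstacle I anticipate is the $RT$ direction of the ideal property: showing that a bounded operator maps almost Dunford–Pettis sets to almost Dunford–Pettis sets, since the adjoint of a disjoint weakly null sequence need not remain disjoint. I expect to handle this by passing through Lemma \ref{Lemma fondamental}: given an almost DP set $B = T(\mathrm{sol}(A))$ (using $(2)$ rather than $(3)$, so $B$ is solid), and a disjoint weakly null $(g_n)\subset E'$, we have $|g_n|\xrightarrow{w^*}0$, and for every disjoint $(y_n)\subset (RB)^+$... — more precisely, apply Lemma \ref{DF} to the set $\mathrm{sol}(RB)$ and the sequence $(g_n)$, after verifying $g_n(y_n)\to 0$ for disjoint $(y_n)\subset(\mathrm{sol}(RB))^+$, which reduces via $R$ order bounded and Lemma \ref{Lemma fondamental}(1) back to the almost-DP property of $B$. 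Threading the solid-hull bookkeeping correctly is the delicate point; everything else is routine.
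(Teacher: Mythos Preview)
Your treatment of linearity, norm-closedness, and the $TR$ direction (with $T$ awDP applied after $R$) is correct and matches the paper's use of characterisation~(3): $R(A)$ is relatively weakly compact, hence $T(R(A))$ is almost DP.

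The gap is in your $RT$ direction. You correctly diagnose the obstacle --- $R'g_n$ need not be disjoint, so the almost-DP property of $T(A)$ cannot be applied to it directly --- but your proposed workaround does not close the gap. First, $B=T(\mathrm{sol}(A))$ is \emph{not} solid in general (the image of a solid set under an order bounded operator need not be solid), so the sentence ``using (2) rather than (3), so $B$ is solid'' is mistaken. Second, even after passing to $\mathrm{sol}(RB)$, the step ``verify $g_n(y_n)\to 0$ for disjoint $(y_n)\subset(\mathrm{sol}(RB))^{+}$, which reduces via $R$ order bounded and Lemma~\ref{Lemma fondamental}(1) back to the almost-DP property of $B$'' does not go through: a disjoint sequence $(y_n)$ with $0\le y_n\le |Rb_n|$ for some $b_n\in B$ gives no control over $(b_n)$ or $(Rb_n)$, so there is nothing to feed into Lemma~\ref{Lemma fondamental}(1), which requires the disjoint sequence to live in the positive part of a solid set on the \emph{domain} side of the operator.

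The paper sidesteps all of this by switching to the dual characterisation~(5) of Theorem~\ref{th carac awDPop} for the $RT$ case. Since $(RT)'=T'R'$, take any relatively weakly compact $B\subset E'$; then $R'(B)$ is again relatively weakly compact in $E'$, and because $T$ is awDP, $T'(R'(B))$ is an almost (L)-set by~(5). Thus $(RT)'$ carries relatively weakly compact subsets of $E'$ to almost (L)-sets, so $RT$ is awDP by~(5) again. The symmetry between (3) and (5) is exactly what lets you handle the two sides of the ideal property without ever needing to know whether bounded (or order bounded) operators preserve almost DP sets.
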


\begin{proof}
It is easy to see from the characterisation $\left( 3\right) $ of Theorem %
\ref{th carac awDPop} that if for two operators $T,S:E\rightarrow E$, $S$\
is an awDP operator thus the product $ST$\ is so. Now, if $T$\ is an awDP
operator, let $B$\ be a relatively weakly compact subset of $E^{\prime }$.
Hence, $S^{\prime }\left( B\right) $\ is also a relatively weakly compact
subset of $E^{\prime }$. As $T$\ is an awDP operator then by Theorem \ref{th
carac awDPop}$\left( 5\right) $, $T^{\prime }S^{\prime }\left( B\right) $\
is an almost (L)-set in $E^{\prime }$. This shows that $\left( ST\right)
^{\prime }=T^{\prime }S^{\prime }$\ carries each relatively weakly compact
subset of $E^{\prime }$\ to an almost (L)-set in $E^{\prime }$. Thus, by
Theorem \ref{th carac awDPop}$\left( 5\right) $ again, $ST$\ is an awDP
operator and we are done.
\end{proof}

In our following result, we show that order bounded awDP operators satisfy
some lattice approximations.
\begin{theorem}
\label{th latt app}Let $T:E\rightarrow F$ be an order bounded awDP operator
between two Banach lattices. Then, the following assertions hold:

\begin{enumerate}
\item For each relatively weakly compact subsets $A\subset E$ , $B\subset
F^{\prime }$, and for every $\varepsilon >0$, there exists some $u\in E^{+}$%
satisfying
\begin{equation*}
\left\vert f\right\vert \left( T\left( \left\vert x\right\vert -u\right)
^{+}\right) \leq \varepsilon
\end{equation*}%
for all $x\in A$ and all $f\in B$.

\item For each relatively weakly compact subsets $A\subset E$ , $B\subset
F^{\prime }$, and for every $\varepsilon >0$, there exists some $g\in \left(
F^{\prime }\right) ^{+}$ satisfying
\begin{equation*}
\left( \left\vert f\right\vert -g\right) ^{+}\left( T\left\vert x\right\vert
\right) \leq \varepsilon
\end{equation*}%
for all $x\in A$ and all $f\in B$.
\end{enumerate}
\end{theorem}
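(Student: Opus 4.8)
The plan is to reduce each of the two approximation statements to the corresponding set characterisation from Theorem~\ref{th carac awDPop} together with Lemma~\ref{DF}, applied to an auxiliary solid set. For assertion~(1), I would start from the fact that, by Theorem~\ref{th carac awDPop}$(2)$, the set $T(\mathrm{sol}(A))$ is an almost DP set in $F$. The quantity to be controlled, $\left\vert f\right\vert\bigl(T(\left\vert x\right\vert-u)^{+}\bigr)$, should be rewritten using the identity $\left\vert f\right\vert(z)=\sup\{\left\vert f(w)\right\vert:\left\vert w\right\vert\le z\}$ for $z\in F^{+}$, so that it becomes a supremum of values $\left\vert f(Tw)\right\vert$ with $\left\vert w\right\vert\le(\left\vert x\right\vert-u)^{+}$; since $(\left\vert x\right\vert-u)^{+}\le\left\vert x\right\vert$, such $w$ lie in $\mathrm{sol}(A)$. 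The natural move is then to run a contradiction/sequential argument: if no $u$ worked, one extracts a sequence $x_{n}\in A$, $f_{n}\in B$, and a suitably chosen disjointifying sequence $u_{n}$ (taking $u=u_{1}+\dots+u_{n-1}$ say, or using the standard \cite[Theorem~4.34]{AB3}-type disjointification of $(\left\vert x_{n}\right\vert-u_{n-1})^{+}$) to produce a \emph{disjoint} weakly null sequence in $\bigl(\mathrm{sol}(A)\bigr)^{+}$ on which $(f_{n})$, or rather $(T'f_{n})$, fails to converge uniformly to zero — contradicting that $T(\mathrm{sol}(A))$ is almost DP (equivalently, via the inequality $\sup_{y\in T(\mathrm{sol}(A))}\left\vert f_{n}(y)\right\vert\le\sup_{x\in\mathrm{sol}(A)}\left\vert T'(f_{n})\right\vert(x)$ already used in Lemma~\ref{Lemma fondamental}).

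The cleaner route, which I would actually pursue, is to invoke Lemma~\ref{DF} directly. Fix the relatively weakly compact sets $A,B$. Since $B$ is relatively weakly compact, so is $\mathrm{sol}(B)$ (by \cite[Theorem~4.34]{AB3}), and hence $B$ is in particular uniformly integrable / the functionals $\left\vert f\right\vert$, $f\in B$, are "equismall on the tails"; more to the point, I would fix $\varepsilon>0$ and apply the relative weak compactness of $A$: every disjoint sequence in $\bigl(\mathrm{sol}(A)\bigr)^{+}$ is weakly null and, by Theorem~\ref{th carac awDPop}, $\sup_{f\in B}\left\vert f\right\vert(Tx_{n})\to0$ along it (using that $\left\vert f_{n}\right\vert\xrightarrow{w}0$ for disjoint sequences in $\mathrm{sol}(B)$ and Lemma~\ref{Lemma fondamental}). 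A disjointification argument of the form "$(\left\vert x\right\vert-u)^{+}$ over all $x\in A$ can be forced arbitrarily small in the relevant functional-uniform sense" is exactly the content of \cite[Theorem~4.34]{AB3} on weakly compact sets in Banach lattices: for a relatively weakly compact $A$ and $\varepsilon>0$ there is $u\in E^{+}$ with $\sup_{x\in A}\phi\bigl((\left\vert x\right\vert-u)^{+}\bigr)\le\varepsilon$ for a given positive functional, but here we need it uniformly over $f\in B$. So I would combine: (a) relative weak compactness of $A$ to get a single $u$ that makes $(\left\vert x\right\vert-u)^{+}$ small uniformly on $A$ against the \emph{single} functional $\sup_{f\in B}\left\vert T'f\right\vert=:h$ — but that supremum need not be a functional, which is the real difficulty (see below) — and (b) the almost-DP conclusion of Theorem~\ref{th carac awDPop} to handle the disjoint "overflow" part.

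For assertion~(2) I would argue dually and symmetrically, using Theorem~\ref{th carac awDPop}$(4)$: $T'(\mathrm{sol}(B))$ is an almost (L)-set in $E'$. Writing $(\left\vert f\right\vert-g)^{+}(T\left\vert x\right\vert)$ and using $(\left\vert f\right\vert-g)^{+}(z)=\sup\{h(z): 0\le h\le\left\vert f\right\vert,\ h\perp g\ \text{in the appropriate sense}\}$-type estimates together with the duality $f(T\left\vert x\right\vert)=(T'f)(\left\vert x\right\vert)=\left\vert x\right\vert(T'f)$ after passing to the bidual via the natural embedding $j:F\to F''$ (as in the proof of Lemma~\ref{Lemma fondamental}(2)), the statement should reduce to: $j(T\left\vert x\right\vert)$, $x\in A$, ranges in a relatively weakly compact solid-hull-controlled set of $F''$, and $(\left\vert f\right\vert-g)^{+}$ picks out the "disjoint-from-$g$" part, which is handled by the almost (L)-set property of $T'(\mathrm{sol}(B))$ via \cite[Theorem~4.34]{AB3} / Lemma~\ref{DF}.

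**Main obstacle.** The delicate point in both parts is converting a \emph{set}-level uniform-smallness statement ("$T(\mathrm{sol}(A))$ is almost DP", which controls $\sup_{x}\left\vert f_{n}(Tx)\right\vert$ along disjoint weakly null $(f_{n})$) into an \emph{element}-level statement producing a single $u\in E^{+}$ (resp. $g\in(F')^{+}$) that works simultaneously for all $x\in A$ and all $f\in B$. The natural object $\sup_{f\in B}\left\vert T'f\right\vert$ is a sublinear functional on $E^{+}$, not a positive linear functional, so the classical disjointification \cite[Theorem~4.34]{AB3} does not apply verbatim; I expect the fix is to pass to $T'(\mathrm{sol}(B))$, which is relatively weakly compact in $E'$, use \cite[Theorem~4.34]{AB3} for the \emph{solid hull of that set} to get a controlling $u$, and then absorb the remaining disjoint "tail" using the almost DP / almost (L) property — essentially the same two-ingredient scheme as in the proof of Theorem~\ref{th carac awDPop}, now run with $\varepsilon$–$u$ quantifiers instead of sequences. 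Getting the order of quantifiers right (choose $u$ from $A$ first, then handle $B$, or vice versa) and making the disjointification uniform over the second set is where the care is needed.
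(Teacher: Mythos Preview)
Your first paragraph already contains the paper's argument: assume by contradiction that no $u$ works, extract sequences $(x_n)\subset A$, $(f_n)\subset B$ with $|f_n|\bigl(T(|x_{n+1}|-4^n\sum_{i\le n}|x_i|)^+\bigr)>\varepsilon$, and use the standard disjointification (Lemma~4.35 of \cite{AB3}, not Theorem~4.34) to produce a disjoint sequence $(z_n)$ in $\bigl(\mathrm{sol}(A)\bigr)^+$ with $|f_n|(Tz_n)\not\to 0$. The contradiction, however, must come from Theorem~\ref{th carac awDPop}(4), not (2): what you need is that for every disjoint weakly null $(z_n)\subset E$ one has $\sup_{f\in\mathrm{sol}(B)}|f(Tz_n)|\to 0$, i.e.\ that $T'(\mathrm{sol}(B))$ is an almost (L)-set. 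The almost-DP property of $T(\mathrm{sol}(A))$ only controls disjoint weakly null sequences \emph{in $F'$}, and your $(f_n)$ (or $(|f_n|)$) is neither disjoint nor weakly null in general --- it is just an arbitrary sequence in $B$.

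The ``cleaner route'' you say you would actually pursue has two genuine gaps. First, the claim that $\mathrm{sol}(B)$ is relatively weakly compact whenever $B$ is, is false in general (this is precisely the question treated in \cite{chen rwcp}); Theorem~4.34 of \cite{AB3} only asserts that disjoint sequences in $\mathrm{sol}(B)$ are weakly null, which is strictly weaker. Second, and for the same reason, $T'(\mathrm{sol}(B))$ need not be relatively weakly compact in $E'$, so the fix you propose in the ``Main obstacle'' paragraph does not go through. The good news is that none of this is needed: the paper's Step~1 simply quotes Theorem~\ref{th carac awDPop}(4), which already yields $\sup_{f\in\mathrm{sol}(B)}|f(Tz_n)|\to 0$ for disjoint $(z_n)\subset\bigl(\mathrm{sol}(A)\bigr)^+$ directly from the awDP hypothesis, and Step~2 is exactly the $4^n$-contradiction you sketched. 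So abandon the ``cleaner route'' and simply finish the argument from your first paragraph, swapping part~(2) for part~(4) of Theorem~\ref{th carac awDPop}; assertion~(2) of the theorem is then proved by the symmetric argument, as the paper notes.
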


\begin{proof}
Note that the proof is similar for the two assertions, so we present only
that of the first one. To do this, we proceed in two steps:

Step 1: For every disjoint sequence $\left( x_{n}\right) $ in the solid hull
of $A$, the sequence $\left( Tx_{n}\right) $ converges uniformly to zero on
the solid hull of $B$. Indeed, the sequence $\left( x_{n}\right) $ is in
this case, a disjoint weakly null sequence (Theorem 4.34 of \cite{AB3}).
Thus, by Theorem \ref{th carac awDPop}$\left( 4\right) $ we have
\begin{equation*}
\sup_{f\in \mathrm{sol}\left( B\right) }\left\vert f\left( Tx_{n}\right)
\right\vert =\sup_{g\in T^{\prime }\left( \mathrm{sol}\left( B\right)
\right) }\left\vert g\left( x_{n}\right) \right\vert \rightarrow 0
\end{equation*}%
as desired.

Step 2: Assume by way of contradiction that there exists a relatively weakly
compact subsets $A\subset E$ , $B\subset F^{\prime }$ and some $\varepsilon
>0$ such that for each $u\in E^{+}$ we have
\begin{equation*}
\left\vert f\right\vert \left( T\left( \left\vert x\right\vert -u\right)
^{+}\right) >\varepsilon
\end{equation*}%
for at least two elements $x\in A$ and $f\in B$. In particular, an easy
inductive argument shows that there exists a sequences $\left( x_{n}\right)
\subset A$ , $\left( f_{n}\right) \subset B$ such that%
\begin{equation}
\left\vert f_{n}\right\vert \left( T\left( \left\vert x_{n+1}\right\vert
-4^{n}\sum_{i=1}^{n}\left\vert x_{i}\right\vert \right) ^{+}\right)
>\varepsilon  \tag{**}  \label{eq 1}
\end{equation}%
holds for each $n$. Put $y=\sum_{n=1}^{\infty }2^{-n}\left\vert
x_{n}\right\vert $ , $y_{n}=\left( \left\vert x_{n+1}\right\vert
-4^{n}\sum_{i=1}^{n}\left\vert x_{i}\right\vert \right)
^{+}$ and   \break    $z_{n}=\left( \left\vert x_{n+1}\right\vert
-4^{n}\sum_{i=1}^{n}\left\vert x_{i}\right\vert -2^{-n}y\right) ^{+}$%
. From Lemma 4.35 of \cite{AB3} the sequence $\left( z_{n}\right) $ is
disjoint. Also, since $0\leq z_{n}\leq |x_{n}+1|$ holds, we see that $\left(
z_{n}\right) \subset sol\left( A\right) $, and so by Step 1 $sup_{f\in
sol\left( B\right) }\left\vert f\left( Tz_{n}\right) \right\vert \rightarrow
0$. In particular, $\left\vert f_{n}\right\vert \left( Tz_{n}\right)
\rightarrow 0$. On the other hand, we have $0\leq y_{n}-z_{n}\leq 2^{-n}y$
from which we get $\left\Vert y_{n}-z_{n}\right\Vert \leq 2^{-n}\left\Vert
y\right\Vert $. In particular, we infer that $\left\vert f_{n}\right\vert
\left( T\left( y_{n}-z_{n}\right) \right) \rightarrow 0$. Therefore, we see
that%
\begin{equation*}
\left\vert f_{n}\right\vert \left( Ty_{n}\right) =\left\vert
f_{n}\right\vert \left( T\left( y_{n}-z_{n}\right) \right) +\left\vert
f_{n}\right\vert \left( Tz_{n}\right) \rightarrow 0\text{,}
\end{equation*}%
which contradicts (\ref{eq 1}). This completes the proof.
\end{proof}

\begin{corollary}
If $T:E\rightarrow F$ is a positive operator between two Banach lattices,
then the following assertions are equivalent:

\begin{enumerate}
\item $T$ is an awDP operator.

\item For each relatively weakly compact subsets $A\subset E$ , $B\subset
F^{\prime }$, and for every $\varepsilon >0$, there exists some $u\in E^{+}$%
satisfying
\begin{equation*}
\left\vert f\right\vert \left( T\left( \left\vert x\right\vert -u\right)
^{+}\right) \leq \varepsilon
\end{equation*}%
for all $x\in A$ and all $f\in B$.

\item For each relatively weakly compact subsets $A\subset E$ , $B\subset
F^{\prime }$, and for every $\varepsilon >0$, there exists some $g\in \left(
F^{\prime }\right) ^{+}$ satisfying
\begin{equation*}
\left( \left\vert f\right\vert -g\right) ^{+}\left( T\left\vert x\right\vert
\right) \leq \varepsilon
\end{equation*}%
for all $x\in A$ and all $f\in B$.
\end{enumerate}
\end{corollary}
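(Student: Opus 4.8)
Since every positive operator is order bounded, the implications $(1)\Rightarrow(2)$ and $(1)\Rightarrow(3)$ are nothing but Theorem~\ref{th latt app} applied to $T$. So the plan is to establish the two reverse implications $(2)\Rightarrow(1)$ and $(3)\Rightarrow(1)$ directly from the sequential definition of awDP operators, using in an essential way that $T\ge 0$ (hence also $T'\ge 0$).

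For $(2)\Rightarrow(1)$, I would start with two disjoint weakly null sequences $(x_{n})\subset E$ and $(f_{n})\subset F'$; the sets $A=\{x_{n}:n\in\mathbb{N}\}$ and $B=\{f_{n}:n\in\mathbb{N}\}$ are then relatively weakly compact, so fixing $\varepsilon>0$, hypothesis $(2)$ provides $u\in E^{+}$ with $|f_{n}|\bigl(T(|x_{n}|-u)^{+}\bigr)\le\varepsilon$ for every $n$. The key algebraic step is the lattice identity $a=(a-b)^{+}+a\wedge b$, valid for $a,b\ge 0$; applied with $a=|x_{n}|$, $b=u$, and combined with $0\le|Tx_{n}|\le T|x_{n}|$, it yields
\[
|f_{n}(Tx_{n})|\le|f_{n}|\bigl(T|x_{n}|\bigr)=|f_{n}|\bigl(T(|x_{n}|-u)^{+}\bigr)+|f_{n}|\bigl(T(|x_{n}|\wedge u)\bigr).
\]
The first term is $\le\varepsilon$; for the second, from $0\le|x_{n}|\wedge u\le u$ and $T\ge0$ we get $0\le T(|x_{n}|\wedge u)\le Tu$, whence $0\le|f_{n}|\bigl(T(|x_{n}|\wedge u)\bigr)\le|f_{n}|(Tu)$, and this tends to $0$ because $(f_{n})$ being disjoint and weakly null forces $|f_{n}|\to0$ in $\sigma(F',F)$ (the fact recalled in the Introduction, used in the Banach lattice $F'$). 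Hence $\limsup_{n}|f_{n}(Tx_{n})|\le\varepsilon$, and since $\varepsilon>0$ is arbitrary, $f_{n}(Tx_{n})\to0$, i.e.\ $T$ is awDP.

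For $(3)\Rightarrow(1)$ I would run the symmetric argument on the dual side: with $(x_{n})$, $(f_{n})$, $A$, $B$ as above and $\varepsilon>0$, hypothesis $(3)$ gives $g\in(F')^{+}$ with $(|f_{n}|-g)^{+}\bigl(T|x_{n}|\bigr)\le\varepsilon$, and now I use $|f_{n}|=(|f_{n}|-g)^{+}+|f_{n}|\wedge g$ together with the adjoint relation and $T'\ge 0$:
\[
|f_{n}(Tx_{n})|\le|f_{n}|\bigl(T|x_{n}|\bigr)=(|f_{n}|-g)^{+}\bigl(T|x_{n}|\bigr)+T'\bigl(|f_{n}|\wedge g\bigr)(|x_{n}|).
\]
The first term is $\le\varepsilon$; for the second, $0\le T'(|f_{n}|\wedge g)\le T'g$ gives $0\le T'(|f_{n}|\wedge g)(|x_{n}|)\le (T'g)(|x_{n}|)\to0$, because $|x_{n}|\to0$ weakly in $E$ (again the Introduction fact, now in $E$) and $T'g\in E'$. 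As before, this yields $f_{n}(Tx_{n})\to0$.

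I do not anticipate a genuine obstacle: beyond invoking Theorem~\ref{th latt app} for the forward directions, the whole content is the decomposition $a=(a-b)^{+}+a\wedge b$, monotonicity of $T$ and $T'$, and the two ``disjoint plus weakly null implies modulus weakly null'' facts (one in $F'$, one in $E$). The only point requiring a little care is making sure the approximating $u$ (resp.\ $g$) is chosen once for the whole sequence and then evaluated on the diagonal pairs $(x_{n},f_{n})$.
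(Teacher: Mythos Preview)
Your proposal is correct and follows essentially the same route as the paper: invoke Theorem~\ref{th latt app} for $(1)\Rightarrow(2)$ and $(1)\Rightarrow(3)$, and for the converses take the diagonal sets $A=\{x_n\}$, $B=\{f_n\}$, split via $a=(a-b)^{+}+a\wedge b$, and use $|f_n|\overset{w}{\to}0$ (resp.\ $|x_n|\overset{w}{\to}0$) to kill the remaining term. The only cosmetic differences are that the paper bounds directly by $|f_n|(Tu)$ (using $a\le(a-b)^{+}+b$) rather than passing through $|f_n|\bigl(T(|x_n|\wedge u)\bigr)$, and that the fact you cite gives $|f_n|\to 0$ in $\sigma(F',F'')$ rather than $\sigma(F',F)$---but either suffices since you only need $|f_n|(Tu)\to 0$.
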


\begin{proof}
Note that the proof is similar for the two equivalences $1\Leftrightarrow 2$
and $1\Leftrightarrow 3$. The implication $1\Rightarrow 2$ is exactly
Theorem \ref{th latt app}$(1)$. For the reciprocal one, let $\left(
x_{n}\right) \subset E$ , $\left( f_{n}\right) \subset F^{\prime }$ be two
disjoint weakly null sequences, and let $\varepsilon >0$. Put $A=\left\{
x_{n}:n\in \mathbb{N}\right\} $, $B=\left\{ f_{n}:n\in \mathbb{N}\right\} $.
By hypothesis there exists some $u\in E^{+}$ so that $\left\vert
f\right\vert \left( T\left( \left\vert x\right\vert -u\right) ^{+}\right)
\leq \varepsilon $ holds for all $x\in A$ and all $f\in B$. In particular, $%
\left\vert f_{n}\right\vert \left( T\left( \left\vert x_{n}\right\vert
-u\right) ^{+}\right) \leq \varepsilon $ for all $n$. Now, as $\left\vert
f_{n}\right\vert \overset{w}{\rightarrow }0$ choose some natural number $m$
such that $\left\vert f_{n}\right\vert \left( Tu\right) \leq \varepsilon $
holds for every $n\geq m$. Thus, for every $n\geq m$ we get%
\begin{eqnarray*}
\left\vert f_{n}\left( Tx_{n}\right) \right\vert &\leq &\left\vert
f_{n}\right\vert \left( T\left\vert x_{n}\right\vert \right)
\\
&\leq &\left\vert f_{n}\right\vert \left( T\left( \left\vert
x_{n}\right\vert -u\right) ^{+}\right) +\left\vert f_{n}\right\vert \left(
Tu\right)
\leq 2\varepsilon \text{.}
\end{eqnarray*}

This shows that $f_{n}\left( Tx_{n}\right) \rightarrow 0$, and then $T$ is
an awDP operator.
\end{proof}

\begin{corollary}
For a Banach lattice $E$, the following assertions are equivalent:

\begin{enumerate}
\item $E$ has the wDP property.

\item For each relatively weakly compact subsets $A\subset E$ , $B\subset
F^{\prime }$, and for every $\varepsilon >0$, there exists some $u\in E^{+}$%
satisfying
\begin{equation*}
\left\vert f\right\vert \left( \left( \left\vert x\right\vert -u\right)
^{+}\right) \leq \varepsilon
\end{equation*}%
for all $x\in A$ and all $f\in B$.

\item For each relatively weakly compact subsets $A\subset E$ , $B\subset
F^{\prime }$, and for every $\varepsilon >0$, there exists some $g\in \left(
F^{\prime }\right) ^{+}$ satisfying
\begin{equation*}
\left( \left\vert f\right\vert -g\right) ^{+}\left( \left\vert x\right\vert
\right) \leq \varepsilon
\end{equation*}%
for all $x\in A$ and all $f\in B$.
\end{enumerate}
\end{corollary}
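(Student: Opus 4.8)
The plan is to obtain this statement as the special case of the preceding corollary in which $T$ is the identity operator $I_{E}\colon E\to E$. First I would note that $I_{E}$ is a positive operator and that it is order bounded, since $I_{E}[-x,x]=[-x,x]$ for every $x\in E^{+}$; hence $I_{E}$ satisfies the hypotheses of the preceding corollary, with the role of $F$ played by $E$ itself (so the set $B$ of that corollary lives in $E^{\prime}$). Next I would invoke the fact recorded in the introduction: by \cite[Theorem~2.1(3)]{KM} together with (\ref{eq0}), the Banach lattice $E$ has the wDP property precisely when $I_{E}$ is an awDP operator.

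With these two observations in hand, the proof is a direct translation. Applying the preceding corollary to $T=I_{E}$, its assertion that ``$I_{E}$ is awDP'' is, by the quoted equivalence, the same as assertion~(1) here, namely that $E$ has the wDP property. Its second assertion asks, for all relatively weakly compact $A\subset E$ and $B\subset E^{\prime}$ and all $\varepsilon>0$, for some $u\in E^{+}$ with $|f|\bigl(I_{E}(|x|-u)^{+}\bigr)\le\varepsilon$ for every $x\in A$ and $f\in B$; since $I_{E}(|x|-u)^{+}=(|x|-u)^{+}$, this is word for word assertion~(2) here. Similarly, using $I_{E}|x|=|x|$, its third assertion is assertion~(3) here. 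Thus the three conditions are equivalent, and nothing more is needed.

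There is essentially no obstacle: the only point meriting a line of care is to confirm that the identity operator really falls under the hypotheses of the cited corollary (positivity and order boundedness), which is immediate, and to read ``$B\subset F^{\prime}$'' in the statement as ``$B\subset E^{\prime}$'' under the identification $F=E$ forced by taking $T=I_{E}$.
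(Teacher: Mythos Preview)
Your proposal is correct and matches the paper's approach: the paper states this corollary without proof, intending it as the immediate specialization of the preceding corollary to $T=I_{E}$, exactly as you outline. Your observation that $B\subset F'$ should be read as $B\subset E'$ under $F=E$ is the only clarification needed.
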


\section{Proof of Theorem A}

\subsection*{$\left( i\right) $ $E$ has sequentially weakly continuous
lattice operations}

Let $\left( x_{n}\right) \subset E$\ , $\left( f_{n}\right) \subset
F^{\prime }$\ be two weakly null sequences. We shall see that $f_{n}\left(
Tx_{n}\right) \rightarrow 0$. To this end, put $A=sol\left\{ x_{n}:n\in
\mathbb{N}\right\} $\ and $B=sol\left\{ f_{n}:n\in \mathbb{N}\right\} $. We
proceed in two steps:

{\it{Step 1}}: We claim that $g_{n}\left( Tx_{n}\right)
\rightarrow 0$\ for every disjoint sequence $\left( g_{n}\right)
\subset B^{+}$. Let $\left(
g_{n}\right) \subset B^{+}$\ be such a sequence. Thus, we have $g_{n}\overset%
{w}{\rightarrow }0$. As $T$\ is an awDP operator, it follows by Theorem \ref%
{th carac awDPop}$\left( 2\right) $ that $\left( g_{n}\right) $\ converges
uniformly to zero on $T\left( A\right) $, that is, $\sup_{x\in
A}\left\vert g_{n}\left( Tx\right) \right\vert \rightarrow 0$. From the
inequality $\left\vert g_{n}\left( Tx_{n}\right) \right\vert \leq
\sup_{x\in A}\left\vert g_{n}\left( Tx\right) \right\vert $, we
conclude that $g_{n}\left( Tx_{n}\right) \rightarrow 0$.

{\it{Step 2}}: Since the lattice operations in $E$\ are
sequentially weakly
continuous, we have $\left\vert x_{n}\right\vert \overset{w}{\rightarrow }0$%
. Thus, taking into account Step 1, we see by Lemma \ref{Lemma fondamental}$%
\left( 2\right) $, that $\left( x_{n}\right) $\ converges
uniformly to zero on $T^{\prime }\left( B\right) $, i.e.,
$\sup_{f\in B}\left\vert f\left( Tx_{n}\right) \right\vert
\rightarrow 0$. From $\left\vert f_{n}\left( Tx_{n}\right)
\right\vert \leq \sup_{f\in B}\left\vert f\left(
Tx_{n}\right) \right\vert $, we conclude that $f_{n}\left(
Tx_{n}\right) \rightarrow 0$. Therefore $T$\ is a wDP operator.
\subsection*{$\left( ii\right) $ $F^{\prime }$ has sequentially weakly
continuous lattice operations}

Note that the two situations $\left( i\right) $ and $\left( ii\right) $ are
symmetric. Let $\left( x_{n}\right) \subset E$\ , $\left( f_{n}\right)
\subset F^{\prime }$\ be two weakly null sequences. We shall see that $%
f_{n}\left( Tx_{n}\right) \rightarrow 0$. To this end, put $A=sol\left\{
x_{n}:n\in \mathbb{N}\right\} $\ and $B=sol\left\{ f_{n}:n\in \mathbb{N}%
\right\} $. We proceed as in $\left( i\right) $:

{\it{Step 1}}: We claim that $f_{n}\left( Ty_{n}\right)
\rightarrow 0$\ for every disjoint sequence $\left( y_{n}\right)
\subset A^{+}$. Let $\left(
y_{n}\right) \subset A^{+}$\ be such a sequence. Thus, we have $y_{n}\overset%
{w}{\rightarrow }0$. As $T$\ is an awDP operator, it follows by Theorem \ref%
{th carac awDPop}$\left( 4\right) $ that $\left( y_{n}\right) $\ converges
uniformly to zero on $T^{\prime }\left( B\right) $, that is, $%
\sup_{g\in B}\left\vert \left( T^{\prime }g\right) \left(
y_{n}\right) \right\vert \rightarrow 0$. From the inequality $\left\vert
f_{n}\left( Ty_{n}\right) \right\vert =\left\vert T^{\prime }\left(
f_{n}\right) \left( y_{n}\right) \right\vert \leq \sup_{g\in
B}\left\vert \left( T^{\prime }g\right) \left( y_{n}\right) \right\vert $,
we conclude that $f_{n}\left( Ty_{n}\right) \rightarrow 0$.

{\it{Step 2}}: Since the lattice operations in $F^{\prime }$\ are
sequentially weakly continuous, we have $\left\vert f_{n}\right\vert \overset%
{w}{\rightarrow }0$. Thus, taking into account Step 1, we see by Lemma \ref%
{Lemma fondamental}$\left( 1\right) $, that $\left( f_{n}\right) $\
converges uniformly to zero on $T\left( A\right) $, i.e., $\sup_{x\in
A}\left\vert f_{n}\left( Tx\right) \right\vert \rightarrow 0$. From $%
\left\vert f_{n}\left( Tx_{n}\right) \right\vert \leq \sup_{x\in
A}\left\vert f_{n}\left( Tx\right) \right\vert $, we conclude that $%
f_{n}\left( Tx_{n}\right) \rightarrow 0$. Therefore $T$\ is a wDP operator.

\subsection*{$\left( iii\right) $ $T$ is positive and $F$ is discrete with
order continuous norm}

Let us recall that an operator $T:E\rightarrow Y$ is said to be order weakly
compact, if $T$ carries each order bounded subset of $E$ to a relatively
weakly compact one in $Y$, equivalently, $T\left( \left[ 0,x\right] \right) $
is relatively weakly compact in $Y$, for every $x\in E^{+}$. We need to
prove the following claim.

\begin{claim}
\label{th prod ST}The product $ST$ is a wDP operator, for every positive
order weakly compact operator $S:F\rightarrow G$ into an arbitrary Banach
lattice.
\end{claim}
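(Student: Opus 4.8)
The plan is to verify the defining property of a weak Dunford--Pettis operator directly: for arbitrary weakly null sequences $(x_n)\subset E$ and $(\phi_n)\subset G'$, I would show that $\phi_n(STx_n)\to 0$. The two ingredients are the lattice approximation established in Theorem~\ref{th latt app}, applied on the domain side of $T$, together with the (classical) fact that in a discrete Banach lattice with order continuous norm every order interval is norm compact, hence a Dunford--Pettis set.

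First I would fix $\varepsilon>0$, set $A=\{x_n:n\in\mathbb N\}$, which is relatively weakly compact, and set $B=\{S'\phi_n:n\in\mathbb N\}\subset F'$, which is relatively weakly compact because $S'$ is weak-to-weak continuous and maps the weakly compact set $\{\phi_n\}\cup\{0\}$ into $F'$. Since $T$ is an order bounded awDP operator, Theorem~\ref{th latt app}$(1)$ provides $u\in E^+$ with $|f|\bigl(T(|x|-u)^+\bigr)\leq\varepsilon$ for all $x\in A$ and $f\in B$; in particular $|S'\phi_n|\bigl(T(|x_n|-u)^+\bigr)\leq\varepsilon$ for every $n$. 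I would then split $x_n=x_n^+-x_n^-$ and, for each sign, use the Riesz identity $x_n^{\pm}=(x_n^{\pm}\wedge u)+(x_n^{\pm}-u)^+$; applying $T$ (linearity) and pairing with $S'\phi_n$ gives $(S'\phi_n)(Tx_n^{\pm})=\phi_n\bigl(ST(x_n^{\pm}\wedge u)\bigr)+(S'\phi_n)\bigl(T(x_n^{\pm}-u)^+\bigr)$.

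Then the two summands are controlled separately. For the second one, $T\geq 0$ gives $0\leq T(x_n^{\pm}-u)^+\leq T(|x_n|-u)^+$, so, using the elementary inequality $|\psi(v)|\leq|\psi|(v)$ for $v\geq 0$, one gets $\bigl|(S'\phi_n)(T(x_n^{\pm}-u)^+)\bigr|\leq|S'\phi_n|\bigl(T(|x_n|-u)^+\bigr)\leq\varepsilon$. For the first one, $0\leq x_n^{\pm}\wedge u\leq u$ forces $T(x_n^{\pm}\wedge u)\in[0,Tu]$; here I would invoke that $F$ is discrete with order continuous norm, so $[0,Tu]$ is norm compact in $F$, whence $S[0,Tu]$ is relatively compact in $G$ and therefore a Dunford--Pettis set, on which the weakly null sequence $(\phi_n)$ converges uniformly to zero; thus $\phi_n\bigl(ST(x_n^{\pm}\wedge u)\bigr)\to 0$. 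Combining, $|\phi_n(STx_n)|\leq|(S'\phi_n)(Tx_n^+)|+|(S'\phi_n)(Tx_n^-)|$ has $\limsup_n$ at most $2\varepsilon$, and letting $\varepsilon\downarrow 0$ yields $\phi_n(STx_n)\to 0$, i.e.\ $ST$ is wDP.

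The one non-formal step, which I expect to be the main obstacle, is the compactness of order intervals in a discrete Banach lattice with order continuous norm; this can be cited from \cite{MN} or \cite{AB3}, or obtained quickly from order continuity of the norm together with the finite band projections onto the discrete elements, and it is the only place where the discreteness of $F$ really enters. In this discrete setting the positivity and order weak compactness of $S$ are used only through boundedness of $S$ (order weak compactness would merely place $S[0,Tu]$ in a relatively weakly compact set, which is weaker than the relative norm compactness that the discreteness of $F$ already supplies). Granting the Claim, the assertion $(iii)$ of Theorem~A follows at once by taking for $S$ the identity operator on $F$, which is positive and order weakly compact because $F$ has order continuous norm, so that $T=I_F\circ T$ is wDP.
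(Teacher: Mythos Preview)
Your argument is correct. Both your proof and the paper's hinge on Theorem~\ref{th latt app}(1) and on the discreteness/order-continuity assumption on $F$, but they exploit these in genuinely different ways.

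The paper first observes that $ST$ is itself awDP (via Corollary~\ref{th product awDP}) and applies Theorem~\ref{th latt app} to $ST$, obtaining $u\in E^{+}$ with $|f_n|\bigl(ST(|x_n|-u)^{+}\bigr)<\varepsilon$. It then bounds $|f_n(STx_n)|\leq |f_n|\bigl(S|Tx_n|\bigr)$, which uses the positivity of $S$, and splits $|Tx_n|=(|Tx_n|-Tu)^{+}+|Tx_n|\wedge Tu$. The order-bounded piece is handled by noting that $|Tx_n|\wedge Tu$ is weakly null (sequential weak continuity of the lattice operations in $F$) and then invoking \cite[Corollary~3.4.9]{MN}, which requires $S$ to be order weakly compact, to get $\|S(|Tx_n|\wedge Tu)\|\to 0$.

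You instead apply Theorem~\ref{th latt app} directly to $T$ with $B=\{S'\phi_n\}$, split $x_n^{\pm}=(x_n^{\pm}\wedge u)+(x_n^{\pm}-u)^{+}$ in $E$, and handle the bounded piece by the norm compactness of $[0,Tu]$ in $F$. This is cleaner in two respects: you avoid the detour through Corollary~\ref{th product awDP} and \cite[Corollary~3.4.9]{MN}, and, as you correctly observe, your argument never uses that $S$ is positive or order weakly compact---only that $S$ is bounded. So your route actually proves a slightly stronger claim. The single external fact you need, that order intervals in a discrete Banach lattice with order continuous norm are norm compact, is indeed standard (it is, for instance, part of the characterisation of such lattices in \cite{MN}); once granted, your proof is complete.
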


\begin{proof}
Let $\left( x_{n}\right) \subset E$, $\left( f_{n}\right) \subset
G^{\prime } $ be a weakly null sequences. We shall see that \break
$f_{n}\left( ST\left(
x_{n}\right) \right) \rightarrow 0$. To this end, let $\varepsilon >0$. As $%
T $ is an awDP operator, then $ST$ is so (Corollary \ref{th product awDP}).
Therefore, by Theorem \ref{th latt app}, pick some $u\in E^{+}$ such that
\begin{equation*}
\left\vert f_{n}\right\vert \left( ST\left( \left\vert x_{n}\right\vert
-u\right) ^{+}\right) <\varepsilon
\end{equation*}%
holds for all $n$. Now, from the inequalities%
\begin{eqnarray*}
\left\vert Tx_{n}\right\vert -Tu &\leq &T\left\vert x_{n}\right\vert
-T\left( \left\vert x_{n}\right\vert \wedge u\right) \\
&\leq &\left\vert T\left\vert x_{n}\right\vert -T\left( \left\vert
x_{n}\right\vert \wedge u\right) \right\vert
=T\left( \left( \left\vert x_{n}\right\vert -u\right) ^{+}\right)
\end{eqnarray*}%
we see that $\left( \left\vert Tx_{n}\right\vert -Tu\right) ^{+}\leq T\left(
\left( \left\vert x_{n}\right\vert -u\right) ^{+}\right) $ holds for all $n$%
. Thus, for every $n$ we have%
\begin{eqnarray*}
\left\vert f_{n}\left( ST\left( x_{n}\right) \right) \right\vert &\leq
&\left\vert f_{n}\right\vert \left( S\left\vert T\left( x_{n}\right)
\right\vert \right) \\
&\leq &\left\vert f_{n}\right\vert \left( S\left( \left( \left\vert T\left(
x_{n}\right) \right\vert -Tu\right) ^{+}\right) \right) +\left\vert
f_{n}\right\vert \left( S\left( \left\vert T\left( x_{n}\right) \right\vert
\wedge Tu\right) \right) \\
&\leq &\left\vert f_{n}\right\vert \left( ST\left( \left\vert
x_{n}\right\vert -u\right) ^{+}\right) +\left\vert f_{n}\right\vert \left(
S\left( \left\vert T\left( x_{n}\right) \right\vert \wedge Tu\right) \right)
\\
&\leq &\varepsilon +\left\vert f_{n}\right\vert \left( S\left( \left\vert
T\left( x_{n}\right) \right\vert \wedge Tu\right) \right) \text{.}
\end{eqnarray*}

Or, as $F$ is discrete with order continuous norm, then it follows from \cite%
[Proposition~2.5.23]{MN} that the lattice operations in $F$ are
sequentially weakly continuous, and then the sequence $\left(
\left\vert T\left(
x_{n}\right) \right\vert \wedge Tu\right) $ is order bounded weakly null in $%
F^{+}$. It follows from \cite[Corollary 3.4.9]{MN} that
$\left\Vert S\left( \left( \left\vert T\left( x_{n}\right)
\right\vert \wedge Tu\right) \right) \right\Vert \rightarrow 0$.
This shows that  $\lim \sup \left\vert f_{n}\left( ST\left(
x_{n}\right) \right) \right\vert \leq \varepsilon $ . As
$\varepsilon >0$ is arbitrary, we infer that $f_{n}\left( ST\left(
x_{n}\right) \right) \rightarrow 0$ as desired.
\end{proof}

Turning to the proof of the theorem, since the norm of $F$ is order
continuous then by \cite[Theorem 4.9]{AB3} each order interval of $F$ is
weakly compact. Thus, the identity operator $I:F\rightarrow F$ is order
weakly compact. Now, it follows from the preceding claim that $T=IT$ is a
wDP operator as desired.

\begin{corollary}
Let $E$ be a Banach lattice such that the lattice operations in $E$ (resp.
in $E^{\prime }$) are sequentially weakly continuous. Then, $E$ has the wDP
property if and only if it has the DP property.
\end{corollary}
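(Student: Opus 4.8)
The plan is to deduce this directly from Theorem A applied to the identity operator $I_{E}\colon E\to E$, which is trivially order bounded. First I would recall the two translations already recorded in the introduction: a Banach lattice $E$ has the wDP property if and only if $I_{E}$ is an awDP operator (this follows from \cite[Theorem 2.1(3)]{KM} together with (\ref{eq0})), and a Banach space has the DP property if and only if its identity operator is wDP. So the content of the corollary is precisely that, for the identity operator on $E$, the notions ``awDP'' and ``wDP'' coincide under the stated hypotheses.

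If the lattice operations in $E$ are sequentially weakly continuous, this is immediate from Theorem A$(i)$ with $T=I_{E}$; if instead the lattice operations in $E^{\prime}$ are sequentially weakly continuous, then, since the codomain of $I_{E}$ is $E$ and its topological dual is $E^{\prime}$, it is Theorem A$(ii)$ with $T=I_{E}$ that applies. In either case $I_{E}$ is awDP if and only if it is wDP, and combining this with the two equivalences above yields that $E$ has the wDP property if and only if it has the DP property.

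There is no real obstacle here: the corollary is a direct specialization of Theorem A, and the only point worth flagging is that one implication --- every Banach lattice with the DP property has the wDP property --- requires no hypothesis at all (it reflects the general fact that every wDP operator is awDP), so all the content lies in the reverse implication, which is exactly where the sequential weak continuity of the lattice operations of $E$ or of $E^{\prime}$ enters, through Theorem A.
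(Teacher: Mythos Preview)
Your proposal is correct and matches the paper's intended argument: the corollary is stated in the paper without proof, as an immediate specialization of Theorem~A to the identity operator $I_{E}$, using the equivalences (recalled in the introduction) between the wDP property and $I_{E}$ being awDP, and between the DP property and $I_{E}$ being wDP. There is nothing to add.
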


In case the range space is a discrete KB-space, the DP operators and their
three weak classes considered in this paper coincide on positive operators.
The details follow.

\begin{corollary}
Let $E$ and $F$ be two Banach lattices such that $F$ is a discrete KB-space.
Then, for a positive operator $T:E\rightarrow F$ the following statements
are equivalent:

\begin{enumerate}
\item $T$ is an awDP operator.

\item $T$ is a wDP operator.

\item $T$ is an almost DP operator.

\item $T$ is a DP operator.
\end{enumerate}
\end{corollary}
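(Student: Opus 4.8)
The plan is to reduce the four-fold equivalence to one new implication. First, $(1)\Leftrightarrow(2)$ is exactly Theorem~A$(iii)$: a KB-space has order continuous norm, so $F$ is discrete with order continuous norm, and $T$ is positive. Next, $(3)\Leftrightarrow(4)$ is the result of W.~Wnuk recalled in the introduction (\cite[Example~4, p.~230]{expWnuk}), which again uses only that $T$ is positive and $F$ is discrete with order continuous norm. Finally, the trivial implications (DP $\Rightarrow$ wDP) and (DP $\Rightarrow$ almost DP) give $(4)\Rightarrow(2)$ and $(4)\Rightarrow(3)$. Hence it suffices to prove one more arrow, say $(2)\Rightarrow(4)$: a positive wDP operator $T\colon E\rightarrow F$ into a discrete KB-space $F$ is a Dunford--Pettis operator.

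For $(2)\Rightarrow(4)$ I would use the set characterisation. A Dunford--Pettis operator is exactly one carrying relatively weakly compact sets to relatively norm compact sets, so it is enough to check this for $T$. Being wDP, $T$ is awDP, hence by Theorem~\ref{th carac awDPop}$(2)$ it carries the solid hull $\mathrm{sol}(A)$ of every relatively weakly compact $A\subset E$ to an \emph{almost DP} set in $F$. Thus $(2)\Rightarrow(4)$ reduces to the lattice-theoretic statement: \emph{in a discrete KB-space every almost DP set is relatively norm compact}; granting it, $T(\mathrm{sol}(A))$, and a fortiori $T(A)$, is relatively compact for every relatively weakly compact $A\subset E$.

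To prove that statement, note that the norm of $F$ is order continuous (KB-space) and $F$ is discrete, so every order interval $[-u,u]$ of $F$ is norm compact (by order continuity, $u$ is the norm limit of its finite partial sums along a complete disjoint system of discrete elements, which forces uniformly vanishing tails over $[-u,u]$); consequently a bounded almost order bounded subset of $F$ is totally bounded, hence relatively compact. Now suppose $A\subset F$ is an almost DP set that is \emph{not} relatively compact. Then $A$ is not almost order bounded, and a standard disjointification argument (cf.~\cite[Lemma~4.35]{AB3}, as in the proof of Theorem~\ref{th latt app}) produces a disjoint sequence $(d_n)\subset\mathrm{sol}(A)^{+}$ with $\|d_n\|\geq\delta>0$ for all $n$, where $0\leq d_n\leq |a_n|$ and $a_n\in A$. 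Using order continuity (so that bands of $F$ are projection bands) I would, for each $n$, take $\psi_n\in(F^{\prime})^{+}$ with $\|\psi_n\|\leq1$, $\psi_n$ carried by the band generated by $d_n$ and $\psi_n(d_n)=\|d_n\|$, and then, since $\psi_n(|a_n|)\geq\psi_n(d_n)\geq\delta$ and $\psi_n(|a_n|)=\sup\{|g(a_n)|:g\in F^{\prime},\ |g|\leq\psi_n\}$, choose $g_n\in F^{\prime}$ with $|g_n|\leq\psi_n$ and $|g_n(a_n)|\geq\delta/2$. Then $(g_n)$ is disjoint (dominated by the disjoint sequence $(\psi_n)$) and norm bounded. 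The crucial point, where both hypotheses on $F$ are spent, is that \emph{a norm-bounded disjoint sequence in the dual of a discrete KB-space is weakly null}: order continuity of the norm of $F$ yields $g_n\overset{\sigma(F^{\prime},F)}{\rightarrow}0$, and the KB-property, i.e.\ $F$ is a projection band in $F^{\prime\prime}$, upgrades this to $g_n\overset{\sigma(F^{\prime},F^{\prime\prime})}{\rightarrow}0$, the singular part of any $\varphi\in F^{\prime\prime}$ contributing nothing. So $(g_n)$ is a disjoint weakly null sequence in $F^{\prime}$, and the definition of an almost DP set forces $\sup_{a\in A}|g_n(a)|\rightarrow0$, contradicting $|g_n(a_n)|\geq\delta/2$. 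Hence $A$ is relatively compact.

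The hard part is exactly that last lemma: bounded disjoint sequences in $F^{\prime}$ are weakly null when $F$ is a discrete KB-space. In the reflexive case $F^{\prime\prime}=F$ and it is immediate from $\sigma(F^{\prime},F)$-nullity; in the non-reflexive case one must verify that the singular functionals on $F^{\prime}$ act asymptotically as zero on a bounded disjoint sequence, and it is there that discreteness of $F$ enters (that $c_0$ is not a KB-space shows the hypothesis is genuinely needed). Everything else is bookkeeping: combining that lemma (hence the lattice statement above) with Theorem~A$(iii)$, Wnuk's result, and the trivial chains (DP $\Rightarrow$ wDP $\Rightarrow$ awDP) and (DP $\Rightarrow$ almost DP $\Rightarrow$ awDP) closes the loop $(4)\Rightarrow(3)\Rightarrow(1)\Leftrightarrow(2)\Rightarrow(4)$, which is the claimed equivalence of $(1)$--$(4)$.
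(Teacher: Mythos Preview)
Your reduction to the single implication $(2)\Rightarrow(4)$ matches the paper's, but your argument for that implication has a genuine gap and takes a different route from the paper.

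The gap is exactly where you locate it: the claim that every norm-bounded disjoint sequence in $F'$ is weakly null when $F$ is a discrete KB-space. By \cite[Theorem~2.4.14]{MN} this is equivalent to asserting that $F''$ has order continuous norm. You write that ``discreteness enters here'' and that ``one must verify that the singular functionals on $F'$ act asymptotically as zero on a bounded disjoint sequence'', but you never carry out that verification. For $F=\ell^{1}$ it happens to work because $F''=(\ell^{\infty})'$ is an AL-space (hence has order continuous norm), and for reflexive $F$ it is trivial; but for a general discrete KB-space your band-decomposition sketch $\varphi=\varphi_{1}+\varphi_{2}$ with $\varphi_{2}\perp F$ in $F''$ does not by itself force $\varphi_{2}(g_{n})\to 0$: the bands $P_{n}''(F'')$ carrying your $g_{n}$ need not lie inside $F$, so disjointness from $F$ says nothing. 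Without this lemma your intermediate statement ``almost DP set $\Rightarrow$ relatively compact in $F$'' is unproved, and the chain breaks.

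The paper avoids this difficulty entirely by exploiting the \emph{predual} rather than the bidual. A discrete KB-space is a dual Banach lattice, $F=G'$ for some Banach lattice $G$ (\cite[Exercise~5.4.E2]{MN}). To show $\Vert Tx_{n}\Vert\to 0$ for a weakly null $(x_{n})\subset E$, the paper invokes the Dodds--Fremlin criterion \cite[Corollary~2.7]{DF}: it suffices that $|Tx_{n}|\overset{w^{\ast}}{\to}0$ in $F=G'$ and that $(Tx_{n})(y_{n})\to 0$ for every disjoint \emph{bounded} sequence $(y_{n})\subset G^{+}$. The first holds because the lattice operations in $F$ are sequentially weakly continuous. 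For the second, since $G'=F$ has order continuous norm, $(y_{n})$ is weakly null in $G$ by \cite[Theorem~2.4.14]{MN}, hence weakly null in $G''=F'$ via the canonical embedding; now the wDP property of $T$ gives $y_{n}(Tx_{n})\to 0$. The key economy is that the paper only ever needs weak nullity for disjoint sequences coming from the predual $G$ (where it follows at once from order continuity of $G'=F$), not for arbitrary disjoint sequences in $F'$ as your approach demands.
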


\begin{proof}
It suffices to show that a positive wDP operator $T:E\rightarrow F$ is DP.
To this end, let $\left( x_{n}\right) \subset E$ be a weakly null sequence.
Since $F$ is a discrete KB-space then it is a dual (see \cite[Exercise 5.4.E2%
]{MN}), that is, $F=G^{\prime }$ for some Banach lattice $G$. To see that $%
\left\Vert Tx_{n}\right\Vert \rightarrow 0$ it suffices by \cite[Corollary
2.7]{DF} to show that $\left\vert Tx_{n}\right\vert \overset{w^{\ast }}{%
\rightarrow }0$ and $\left( Tx_{n}\right) \left( y_{n}\right) \rightarrow 0$
for every disjoint bounded sequence $\left( y_{n}\right) \subset G^{+}$.
Note that, since the lattice operations in $F$ are sequentially weakly
continuous then we have $\left\vert Tx_{n}\right\vert \overset{w}{%
\rightarrow }0$. Now, if $\left( y_{n}\right) \subset G^{+}$ is a disjoint
bounded sequence, then by Theorem 2.4.14 of \cite{MN}$\ y_{n}\overset{w}{%
\rightarrow }0$ as the norm of $G^{\prime }=F$ is order continuous. By the
lattice embedding $G\hookrightarrow G^{\prime \prime }$ we see that $y_{n}%
\overset{w}{\rightarrow }0$ in $G^{\prime \prime }=F^{\prime }$. Since $T$
is a wDP operator, then $\left( Tx_{n}\right) \left( y_{n}\right)
=y_{n}\left( Tx_{n}\right) \rightarrow 0$ as desired. This completes the
proof.
\end{proof}

In particular, we obtain a result noted by W. Wnuk in \cite[Proposition 6]%
{expWnuk}.

\begin{corollary}
For a discrete KB-space $E$ the following statements are equivalent:

\begin{enumerate}
\item $E$ has the wDP property.

\item $E$ has the DP property.

\item $E$ has the positive Schur property.

\item $E$ has the Schur property.
\end{enumerate}
\end{corollary}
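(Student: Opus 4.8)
The plan is to obtain all four equivalences at one stroke by applying the preceding corollary to the identity operator. Since $E$ is assumed to be a discrete KB-space, I would invoke that corollary with the range space taken to be $E$ itself and with $T=I_{E}$, the identity operator on $E$, which is of course positive; the hypotheses are then trivially satisfied. The corollary tells us that $I_{E}$ is an awDP operator if and only if it is a wDP operator, if and only if it is an almost DP operator, if and only if it is a DP operator.

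The remaining step is purely a matter of dictionary: each of the four properties of $E$ in the statement is, by definition or by a fact recalled in the introduction, the assertion that $I_{E}$ belongs to one of these four operator classes. Namely, $E$ has the wDP property exactly when $I_{E}$ is awDP (this is \cite[Theorem~2.1(3)]{KM} together with (\ref{eq0})); $E$ has the DP property exactly when $I_{E}$ is wDP; $E$ has the positive Schur property exactly when $I_{E}$ is almost DP; and $E$ has the Schur property exactly when every weakly null sequence of $E$ is norm null, i.e.\ exactly when $I_{E}$ is a DP operator. Substituting these translations into the chain of equivalences from the first step yields $(1)\Leftrightarrow(2)\Leftrightarrow(3)\Leftrightarrow(4)$.

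I do not expect any genuine obstacle here: the argument is a direct specialization of the previous corollary, and the only things that need a word are the verification that its hypotheses are met (domain and range both equal to the given discrete KB-space, identity operator positive) and the reminder that the four property-to-operator translations above are the standard ones. Once the earlier corollary is in hand, this statement — which recovers \cite[Proposition~6]{expWnuk} — follows at once.
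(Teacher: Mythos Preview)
Your proposal is correct and matches the paper's approach exactly: the paper presents this corollary as an immediate specialization (``In particular'') of the preceding corollary with no further proof, and applying that result to $T=I_{E}$ together with the standard dictionary between the four lattice properties and the corresponding operator classes is precisely the intended argument.
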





\end{document}